\documentclass[11pt]{amsart}
\usepackage{amsfonts,amsmath,amssymb,latexsym,soul,cite,mathrsfs,enumitem}

\usepackage{color,enumitem,graphicx}
\usepackage[colorlinks=true,urlcolor=blue,
citecolor=red,linkcolor=blue,linktocpage,pdfpagelabels,
bookmarksnumbered,bookmarksopen]{hyperref}

\usepackage{amsthm}
\usepackage{commath}
\usepackage[english]{babel}

\usepackage[hyperpageref]{backref}

\usepackage[left=2.9cm,right=2.9cm,top=2.8cm,bottom=2.8cm]{geometry}
\usepackage{graphicx}
\usepackage{dsfont}
\newtheorem{lemma}{Lemma}  
\newtheorem{proposition}{Proposition}
\newtheorem{theorem}{Theorem}

\newtheorem{remark}{Remark}

\numberwithin{equation}{section}

\title[Normalized solutions to a SBP system]{Normalized solutions to a  Schr\"odinger-Bopp-Podolsky system
under  Neumann boundary conditions}

\author[D. G. Afonso]{Danilo G. Afonso}
\author[G. Siciliano]{Gaetano Siciliano}

\address[D. G. Afonso]{\newline\indent Departamento de Matem\'atica
\newline\indent
 Universidade de Bras\'ilia 
\newline\indent
Campus Universit\'ario Darcy Ribeiro,  70910-900 Bras\'ilia, DF, Brazil }
\email{\href{mailto:d.g.afonso@mat.unb.br}{d.g.afonso@mat.unb.br}}

\address[G. Siciliano]{\newline\indent Departamento de Matem\'atica
\newline\indent 
Instituto de Matem\'atica e Estat\'istica
\newline\indent 
 Universidade de S\~ao Paulo 
\newline\indent 
Rua do Mat\~ao 1010,  05508-090 S\~ao Paulo, SP, Brazil }
\email{\href{mailto:sicilian@ime.usp.br}{sicilian@ime.usp.br}}

\thanks{D. G. Afonso was supported by CNPq grant 132634/2018-0. G. Siciliano
was supported by Fapesp grant 2018/17264-4, CNPq grant 304660/2018-3 and Capes.}
\subjclass[2010]{Primary 35J50, 35J58; Secondary 35Q55, 35Q61} 
\keywords{Schr\"odinger-Bopp-Podolsky system, Krasnoselskii genus, Lagrange multipliers, weak solutions.}


\makeindex


\begin{document}

\maketitle

\begin{abstract}
In this paper we study a Schr\"odinger-Bopp-Podolsky system of partial differential equations 
in a bounded and smooth
 domain of $\mathbb R^3$  with a non constant coupling factor. Under a compatibility condition 
 on the boundary data 
we deduce existence and multiplicity of solutions by means of the  Ljusternik-Schnirelmann theory.
\end{abstract}


\section{Introduction}

The Schr\"{o}dinger-Newton equation consists of a nonlinear coupling of the Schr\"{o}dinger equation with a gravitational potential of newtonian form, representing the interaction of a particle with its own gravitational field. 

In 1998, Benci and Fortunato \cite{BenciFortunato1998} treated a similar problem, 
where the Schr\"odinger equation was coupled with Maxwell's equations. Such coupling represents the interaction of 
the particle with its own electromagnetic field. The coupling factor is a constant
$q\neq0$.
In their paper the authors consider  standing waves 
solutions in the purely electrostatic field and this  leads to the so-called 
Schr\"odinger-Poisson system.
They impose a  Dirichlet boundary condition both on the matter field $u$ and the 
electrostatic field $\phi$ and employed variational methods and 
critical point theory to develop a 
procedure that would become standard to treat other similar problems. 

Later, Pisani and Siciliano \cite{PisaniSiciliano2013} treated a Schr\"{o}dinger-Poisson 
system with Neumann boundary conditions on the scalar field $\phi$ and considered 
the case in which the interaction factor responsible for the coupling of the equations 
is non-constant. This gives rise to important and interesting 
considerations regarding the geometry of the  manifold on which find the solutions.

\medskip

In this paper we treat a modification of the problem dealt with by Pisani and Siciliano 
consisting in the addition of a biharmonic term in the equation of the electrostatic potential and imposing
appropriate boundary conditions. The problem studied  can be interpreted as a 
coupling of the
Schr\"{o}dinger equation with the Bopp-Podolsky electrodynamics (for more details 
on this  matter, see \cite{dAveniaSiciliano2019} and the 
references therein). However here we focus on the mathematical aspects of the 
problem.

We point out that in the  literature there are few papers
concerning Schr\"odinger-Bopp-Podolski  systems. Beside \cite{dAveniaSiciliano2019}
we cite here \cite{CT,LPT} where the authors study the problem with a critical nonlinearity,
\cite{GG} where solutions with {\sl a priori} given interaction 
energy for the Schr\"odinger-Bopp-Podolski  system  are found,
\cite{H} where the problem has been addressed in the context of
closed $ 3-$dimensional manifolds both in the subcritical and  critical case
and 
\cite{GK} where the fibering method has been used to prove existence
results depending on a parameter and also nonexistence.

Coming back to our problem, the aim here is to study the following system of 
partial differential equations in a connected, bounded, smooth open set $\Omega 
\subset \mathbb R^3$:
\begin{align}
- \Delta u + q \phi u  - \kappa |u|^{p-2}u  & = \omega u \quad \text{ in } \ \Omega \label{eq:1}\\
\Delta^2 \phi - \Delta \phi & = q u^2 \quad \text{ in } \ \Omega \label{eq:2}
\end{align}
in the unknowns $u,\phi:\Omega\to \mathbb R$ and $\omega\in \mathbb R$. Here
 $\kappa \in \mathbb R$ and $q:\Omega\to \mathbb R$ are given. We assume the following boundary conditions:
\begin{align}
u & = 0 \quad \text{ on } \ \partial \Omega \label{eq:b1}\\
\dpd{\phi}{\mathbf n} & = h_1 \quad \text{ on } \ \partial \Omega \label{eq:b2} \\
\dpd{\Delta \phi}{\mathbf n} & = h_2 \quad \text{ on } \ \partial \Omega \label{eq:b3}
\end{align}
and for simplicity we assume $h_1, h_2$ continuous. 
The symbol $\mathbf n$ 
denotes the unit 
vector normal to $\partial \Omega$ pointing outwards. Since $u$ represents physically the amplitude of the wave function of a particle confined in $\Omega$, we 
assume the following normalizing condition:
\begin{equation}
\label{eq:normalization_condition}
\int_{\Omega} u^2 dx = 1.
\end{equation}
We also assume that the coupling factor $q$ is continuous on $\overline \Omega$:
\begin{equation}\label{eq:continuity}
q \in C(\overline{\Omega}).
\end{equation}

Our main result is the following:

\begin{theorem}
\label{thm:main_theorem}
Let $\kappa > 0$, $p \in (2, 10/3)$ and 
\begin{equation}\label{eq:alfa}
\alpha := \int_{\partial \Omega} h_2 ds - \int_{\partial \Omega} h_1 ds.
\end{equation} 
Assume that $\inf_\Omega q < \alpha < \sup_\Omega q$ and that $|q^{-1}(\alpha)| = 0$. 
Then there exist
infinitely many solutions $(u_n, \omega_n, \phi_n) \in H_0^1 (\Omega) \times \mathbb R \times H^2(\Omega)$ to the problem \eqref{eq:1}  and  \eqref{eq:2} under conditions \eqref{eq:b1}-\eqref{eq:continuity}, with 
$$
\int_{\Omega} |\nabla u_n|^2 dx \to + \infty.
$$
Moreover the ground state solution can be assumed positive.
\end{theorem}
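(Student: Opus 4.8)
The plan is to decouple the system by solving the biharmonic equation \eqref{eq:2} for $\phi$ as a function of $u$ and then to treat the resulting reduced energy as a constrained functional on a manifold in $H_0^1(\Omega)$. First I would integrate \eqref{eq:2} over $\Omega$: by the divergence theorem and the boundary conditions \eqref{eq:b2}--\eqref{eq:b3}, $\int_\Omega(\Delta^2\phi-\Delta\phi)\,dx=\int_{\partial\Omega}(h_2-h_1)\,ds=\alpha$, so the inhomogeneous Neumann problem for $\phi$ is solvable if and only if the compatibility condition $\int_\Omega qu^2\,dx=\alpha$ holds. Working in $H^2(\Omega)$ with the bilinear form $a(\phi,\psi)=\int_\Omega(\Delta\phi\,\Delta\psi+\nabla\phi\cdot\nabla\psi)\,dx$ (coercive modulo constants), the Lax--Milgram theorem produces for each admissible $u$ a solution $\phi_u$, unique up to the additive constant spanning the kernel of $\Delta^2-\Delta$ under \eqref{eq:b2}--\eqref{eq:b3}. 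Together with the normalization \eqref{eq:normalization_condition} this confines the problem to
\[
\mathcal{M}=\Big\{u\in H_0^1(\Omega):\ \int_\Omega u^2\,dx=1,\ \int_\Omega qu^2\,dx=\alpha\Big\},
\]
which is nonempty precisely because $\inf_\Omega q<\alpha<\sup_\Omega q$.

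On $\mathcal{M}$ I would study the reduced functional
\[
I(u)=\frac12\int_\Omega|\nabla u|^2\,dx+\frac14\int_\Omega q\,\phi_u\,u^2\,dx-\frac{\kappa}{p}\int_\Omega|u|^p\,dx,
\]
where the coupling coefficient is normalized so that the unconstrained differential of $I$ reproduces the left-hand side of \eqref{eq:1} (on $\mathcal{M}$ the ambiguity in the additive constant of $\phi_u$ only shifts $I$ by the constant $c\alpha$). Since $\phi_u$ depends on $u$ only through $u^2$, the functional $I$ is even, and its coupling term is weakly continuous and bounded below, so the only term that could destroy coercivity is $-\frac{\kappa}{p}\|u\|_p^p$. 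Here $p\in(2,10/3)$ is decisive: by Gagliardo--Nirenberg with $\|u\|_2=1$ one has $\|u\|_p^p\le C\|\nabla u\|_2^{3(p-2)/2}$ with exponent $3(p-2)/2<2$, whence $I$ is coercive and bounded below on $\mathcal{M}$. Boundedness of Palais--Smale sequences then follows from coercivity, and the compact embeddings $H_0^1(\Omega)\hookrightarrow L^p(\Omega)$ $(p<6)$ together with the strong continuity of $u\mapsto\phi_u$ upgrade weak to strong convergence, giving the $(PS)_c$ condition at every level.

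With these ingredients I would run the Krasnoselskii genus / Ljusternik--Schnirelmann scheme. The first point to check is that $\mathcal{M}$ is a genuine symmetric $C^1$-manifold, i.e. that the gradients of the constraints $G_1(u)=\int_\Omega u^2-1$ and $G_2(u)=\int_\Omega qu^2-\alpha$, namely $G_1'(u)=2u$ and $G_2'(u)=2qu$, are linearly independent along $\mathcal{M}$; a dependence $(q-\lambda)u=0$ a.e. would force $u$ to be supported in $q^{-1}(\lambda)$ and hence $\lambda=\int_\Omega qu^2\,dx=\alpha$, which is incompatible with $\|u\|_2=1$ exactly because $|q^{-1}(\alpha)|=0$. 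Since $\mathcal{M}$ is infinite-dimensional and $\mathbb{Z}_2$-symmetric, the minimax levels $c_n=\inf_{\gamma(A)\ge n}\sup_{u\in A}I(u)$ form a nondecreasing sequence which, by coercivity, diverges to $+\infty$, and by $(PS)$ and the symmetric deformation lemma each $c_n$ is a critical value. This produces infinitely many critical points $u_n\in\mathcal{M}$ with $I(u_n)=c_n\to+\infty$, and coercivity then forces $\int_\Omega|\nabla u_n|^2\,dx\to+\infty$. At each $u_n$ the constrained Euler--Lagrange equation reads $I'(u_n)=2\omega_n u_n+2\mu_n q u_n$ for multipliers $\omega_n,\mu_n$; absorbing the second multiplier into the free additive constant of $\phi_{u_n}$ (replacing it by $\phi_n:=\phi_{u_n}-2\mu_n$, still a solution of \eqref{eq:2}) recovers \eqref{eq:1} with frequency $\omega_n$, so $(u_n,\omega_n,\phi_n)$ solves the full system.

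For the ground state I would take a minimizer $u_1$ of $I$ on $\mathcal{M}$ (the level $c_1$); since $|\nabla|u_1||=|\nabla u_1|$ a.e. and every other term depends on $u_1$ only through $u_1^2$, we have $|u_1|\in\mathcal{M}$ with $I(|u_1|)=I(u_1)$, so the minimizer may be taken nonnegative, and the strong maximum principle applied to \eqref{eq:1} then gives $u_1>0$ in $\Omega$. The main obstacle, to my mind, is not the genus machinery—which is standard once coercivity and $(PS)$ are in place—but rather the geometric verification that $\mathcal{M}$ is a regular constraint, together with the correct bookkeeping of the two Lagrange multipliers: recognizing that the gauge freedom in $\phi$ exactly absorbs the multiplier attached to $\int_\Omega qu^2=\alpha$ is what turns the reduced critical points into genuine solutions of the original system.
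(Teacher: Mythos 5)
Your skeleton coincides with the paper's: integrate \eqref{eq:2} to obtain the compatibility condition, work on $\mathcal M=S\cap N$, reduce to a functional of $u$ alone, get coercivity from Gagliardo--Nirenberg (this is exactly where $p<10/3$ enters), run a genus minimax, and absorb the second Lagrange multiplier into the additive constant of $\phi$ (this last observation is correct and is precisely the paper's $\mu=\overline{\phi}$). However, your reduction step has two genuine defects, which are exactly the points the paper is at pains to address. First, your $\phi_u$ exists only when $\int_\Omega qu^2\,dx=\alpha$, i.e.\ only for $u\in N$: off the constraint the Neumann problem is insolvable, so $I$ has no ``unconstrained differential,'' and the identity $I'(u_n)=2\omega_n u_n+2\mu_n qu_n$ you invoke is literally undefined; moreover $C^1$-smoothness of $u\mapsto\phi_u$ cannot be asserted on $N$ alone. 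The paper's fix (Proposition \ref{prop:auxiiary_problem_to_extend_Phi}) is to replace the right-hand side by $qu^2-\overline{qu^2}$, which makes the solution map $\Phi(u)=L(qu^2)$ globally defined and smooth on $H_0^1(\Omega)$ and equal to the true potential precisely on $N$. Second, the coefficient $\tfrac14$ is wrong in the presence of the inhomogeneous data $h_1,h_2$: writing $\phi_u=\psi_u+\chi+\mathrm{const}$, with $\chi$ the fixed solution of \eqref{eq:chi1}--\eqref{eq:chi4} and $\psi_u$ depending linearly on $qu^2-\alpha/|\Omega|$, the derivative of $\tfrac14\int_\Omega q\phi_u u^2\,dx$ is $\int_\Omega q\psi_u uv\,dx+\tfrac12\int_\Omega q\chi uv\,dx$, not $\int_\Omega q\phi_u uv\,dx$. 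Your Euler--Lagrange equation therefore carries a spurious $-\tfrac12 q\chi u$, and the triples you produce do not solve \eqref{eq:1}--\eqref{eq:2} under \eqref{eq:b2}--\eqref{eq:b3}. The correct reduced functional must weight the two couplings differently ($\tfrac14$ on the $\psi_u$-part, $\tfrac12$ on the $\chi$-part), which is exactly what the paper's change of variables $\varphi=\phi-\chi-\mu$ and the two-variable functional $F$ accomplish.

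The minimax part is also too quick. In the constrained setting, $J|_M'(u_n)\to0$ means $-\Delta u_n+q(\varphi_n+\chi)u_n-\kappa|u_n|^{p-2}u_n-\lambda_n u_n-\beta_n qu_n\to0$ in $H^{-1}(\Omega)$ with two unknown multiplier sequences; compact embeddings alone do not give (PS). One must first prove $\{\lambda_n\},\{\beta_n\}$ bounded, and in Proposition \ref{prop:Palais_Smale} this requires a second use of the hypothesis $|q^{-1}(\alpha)|=0$: it guarantees $(q-\alpha)u\not\equiv0$ for the weak limit $u$, hence a test function $w$ with $\int_\Omega(q-\alpha)uw\,dx\neq0$ against which $\beta_n$ can be estimated. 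Likewise, ``$c_n\to+\infty$ by coercivity'' is not a proof: coercivity only makes sublevels bounded, and bounded symmetric sets in infinite dimensions can have infinite genus (the unit sphere does). What is needed is that every sublevel has \emph{finite} genus, which the paper proves in Lemma \ref{lem:sublevels_have_finite_genus} using (PS) and the deformation lemma. Finally, nonemptiness of the classes $\{A\subset M:\gamma(A)\geq n\}$ with finite minimax values is not automatic from $M$ being infinite-dimensional and symmetric ($M$ is not a sphere): the paper constructs, for every $k$, functions in $M$ with pairwise disjoint supports whose span meets $M$ in a $(k-1)$-dimensional sphere (Theorem \ref{thm:existence_of_sets_of_genus_k}), a construction that again uses $\inf_\Omega q<\alpha<\sup_\Omega q$.
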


Our approach is variational, indeed the solutions will be found as critical points of an energy functional
restricted to  a suitable constraint. In this context by a ground state solution we mean the solution with minimal energy. 
Moreover as a byproduct of the proof, we obtain that also the energy of these solutions is divergent.

\begin{remark}
It is easy to see that if $\kappa < 0$ the result holds with $p \in (2, 6)$. For $\kappa = 0$, see \cite{Afonso2020a}.
\end{remark}

The paper is organized as follows.

In Section \ref{sec:aux} we introduce an auxiliary problem
which will be useful in order to deal with homogeneous boundary conditions.

In Section \ref{sec:M} we give some properties of the constraint $M$
on which we will find the solution.

In Section \ref{sec:variational} we introduce the energy functional 
and show that its critical points on $M$ will give solutions of the problem.

In the final Section \ref{sec:final} by implementing the Ljusternick-Schnirelmann theory
we prove Theorem \ref{thm:main_theorem}.

\medskip

As a matter of notations,
we use the letters $c, c',\ldots $ to denote positive constant whose value can change from line to line.
We use $\|\cdot\|_{p}$ to denote the standard $L^{p}-$norm.

\section{An auxiliary problem}\label{sec:aux}
Our aim is to define a functional whose critical points will be the weak solutions to the problem. 
In order to deal with homogeneous boundary conditions, that will permit to write the functional
in a simpler form, we make a change of variable.

Consider the following auxiliary problem (where $\alpha$ is defined in \eqref{eq:alfa})
\begin{align}
\Delta^2 \chi - \Delta \chi &= \alpha/|\Omega| \quad \text{ in } \ \Omega,\label{eq:chi1} \\
\dpd{\chi}{\mathbf n} &= h_1 \quad \text{ on } \ \partial \Omega,  \label{eq:chi2}\\
\dpd{\Delta\chi}{\mathbf n} &= h_2 \quad \text{ on } \ \partial \Omega, \label{eq:chi3}\\
\int_{\Omega} \chi dx &= 0. \label{eq:chi4}
\end{align}
It is easy to see it has a unique solution.
Indeed, let $\theta$ be the unique function satisfying 
\begin{align*}
\Delta \theta - \theta &= \alpha/|\Omega| \quad \text{ in } \ \Omega \medskip\\
\dpd{\theta}{\mathbf n} &= h_2 \quad \text{ on } \ \partial \Omega,\medskip\\
\int_{\Omega} \theta dx &= \int_{\partial \Omega} h_1 ds
\end{align*}
and then let $\chi$ the  unique function which satisfies
\begin{align*}
\Delta \chi &= \theta, \quad \text{ in } \ \Omega \medskip\\
\dpd{\chi}{\mathbf n} &= h_1 \quad \text{ in } \ \partial \Omega \\
\int_{\Omega} \chi dx &= 0, 
\end{align*} 
see e.g. \cite{Taylor}.
Then  it is easy to see that by construction $\chi$ satisfies  \eqref{eq:chi1}-\eqref{eq:chi4}.


The change of variables we make is 
$$
\varphi = \phi - \chi - \mu, 
$$
where 
$$
\mu = \frac{1}{|\Omega|}\int_{\Omega} \phi dx.
$$
With the new variables $(u, \omega, \varphi, \mu)$ our problem becomes
\begin{align}
- \Delta u + q(\chi + \varphi)u - \kappa|u|^{p - 2}u & = \omega u - \mu q u\quad \text{ in } \ \Omega, \label{eq:new_problem_1} \medskip\\ 
\Delta^2 \varphi - \Delta \varphi & = qu^2 - \alpha/|\Omega| \quad \text{ in } \ 
\Omega, \label{eq:new_problem_2}  \medskip\\
u & = 0 \quad \text{ on } \ \partial \Omega, \label{eq:new_problem_3} \medskip\\
\int_{\Omega} u^2 dx & = 1, \label{eq:new_problem_4}  \medskip\\
\dpd{\varphi}{\mathbf n} & = 0 \quad \text{ on } \ 
\partial \Omega, \label{eq:new_problem_5} \medskip \\
\dpd{\Delta \varphi}{\mathbf n} & = 0 \quad \text{ on } \ \partial \Omega, \label{eq:new_problem_6} \medskip \\
\int_{\Omega} \varphi dx &= 0. \label{eq:new_problem_7}
\end{align}
Notice that the compatibility condition between \eqref{eq:new_problem_2}, \eqref{eq:new_problem_5} and \eqref{eq:new_problem_6} now reads as 
$$
\int_{\Omega} qu^2 dx = \alpha.
$$
Let us define the sets
\begin{align*}
S & := \cbr{u \in H_0^1(\Omega) \ : \ \int_{\Omega} u^2 dx = 1}, \\
N & := \cbr{u \in H_0^1(\Omega) \ : \ \int_{\Omega} qu^2 dx = \alpha}, \\
M & := S \cap N.
\end{align*}
Recall that $\alpha$ depends on both the boundary conditions to the original problem.

If the problem has a solution, then of course $M \neq \emptyset$. Hence, 
\begin{equation}
\label{eq:range_for_alpha}
q_{\min} \leq \alpha \leq q_{\max}
\end{equation}
where
$$
q_{\min} = \inf_\Omega q \quad \text{ and } \quad q_{\max} = \sup_\Omega q.
$$
Indeed, if $\alpha < q_{\min}$, then
$$
\alpha = \int_{\Omega} qu^2 dx \geq  q_{\min} > \alpha,  
$$
which is a contradiction. The case $\alpha > q_{\max}$ is analogous.

From \eqref{eq:range_for_alpha} we deduce that $q^{-1}(\alpha)$ is not empty, 
and indeed is its measure that will play a major role. 

Suppose $\alpha = q_{\min}$ and $|q^{-1}(\alpha)| = 0$. Then 
\begin{align*}
\int_{\Omega} qu^2 dx &= \int_{\{ x\in \Omega : q(x)>\alpha\}} q u^2 dx >\alpha,
\end{align*}
so $M=\emptyset$.
If $\alpha = q_{\max}$ and $|q^{-1}(\alpha)| = 0$ we proceed in an analogous manner to conclude that $M$ is empty and so the problem has no solutions. Therefore, we arrive at the following necessary condition for the existence of solutions: either 
\begin{equation}
\label{eq:necessary_condition_1}
q_{\min} < \alpha < q_{\max}
\end{equation}
or
\begin{equation}
\label{eq:necessary_condition_2}
|q^{-1}(\alpha)| \neq 0.
\end{equation}

\section{The manifold $M$}\label{sec:M}

We now state some properties of the set $M$, referring the reader to \cite{PisaniSiciliano2013} 
for the omitted proofs.

We first note that $M$ is symmetric with respect to the origin: if $u \in M$ then $-u \in M$. This follows trivially from the definition of $M$. We also note that $M$ is weakly closed in $H_0^1 (\Omega)$.

Now, we want to show that under condition \eqref{eq:necessary_condition_1} the set $M$ is not empty. For this, we introduce the following notation.

Let $A \subset \Omega$ be an open subset and define
$$
S_A := \cbr{u \in H_0^1 (A) \ : \ \int_A u^2 dx = 1}
$$
and 
$$
g_A: u \in S_A \mapsto \int_A qu^2 dx \in \mathbb R.
$$

It is immediately seen that
\begin{equation*}
\label{eq:inclusion_1}
g_A(S_A) \subset [\inf_A q, \sup_A q].
\end{equation*}

\begin{lemma}
The following inclusion holds:
\begin{equation*}
\label{eq:inclusion_2}
(\inf_A q, \sup_A q) \subset \overline{g_A(S_A)}.
\end{equation*}
\end{lemma}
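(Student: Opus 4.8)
The plan is to prove the stronger statement that every $\lambda \in (\inf_A q, \sup_A q)$ already lies in $g_A(S_A)$ itself, from which the claimed inclusion follows by passing to closures. If $\inf_A q = \sup_A q$ the interval is empty and there is nothing to prove, so I assume the inequality is strict.

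The central idea is \emph{concentration of mass}. Since $q$ is continuous and $A$ is open, for each $\varepsilon > 0$ I can pick points $x_-, x_+ \in A$ and radii $r_\pm$ with $\overline{B(x_-, r_-)}, \overline{B(x_+, r_+)} \subset A$ disjoint, on which $q < \inf_A q + \varepsilon$ and $q > \sup_A q - \varepsilon$ respectively (these points are distinct because their $q$-values differ). Choosing any $w_\pm \in H_0^1(B(x_\pm, r_\pm))$ with $\|w_\pm\|_2 = 1$ and extending by zero to $H_0^1(A)$, I obtain $g_A(w_-) < \inf_A q + \varepsilon$ and $g_A(w_+) > \sup_A q - \varepsilon$, since $\|w_\pm\|_2 = 1$.

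Next I interpolate between these two extremes. Because $w_-$ and $w_+$ have disjoint supports, the family $u_\theta := \cos\theta\, w_- + \sin\theta\, w_+$, $\theta \in [0, \pi/2]$, satisfies $\|u_\theta\|_2 = 1$, so $u_\theta \in S_A$, and moreover $g_A(u_\theta) = \cos^2\theta\, g_A(w_-) + \sin^2\theta\, g_A(w_+)$. As $\theta$ runs from $0$ to $\pi/2$ this sweeps continuously over the whole interval $[g_A(w_-), g_A(w_+)]$. Fixing $\lambda$ and choosing $\varepsilon \le \min\{\lambda - \inf_A q,\ \sup_A q - \lambda\}$ forces $g_A(w_-) < \lambda < g_A(w_+)$, so the intermediate value theorem produces a $\theta$ with $g_A(u_\theta) = \lambda$. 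Hence $\lambda \in g_A(S_A)$, and taking closures finishes the argument.

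The only genuinely delicate point is the continuity of $g_A$, which underlies both the interpolation step and any alternative connectedness argument: if $u_n \to u$ in $L^2(A)$ then $u_n^2 \to u^2$ in $L^1$ by Cauchy--Schwarz, and since $q$ is bounded (being continuous on the compact set $\overline\Omega$) we get $g_A(u_n) \to g_A(u)$; I would record this first. An alternative to the explicit interpolation is to combine this continuity with the connectedness of the $L^2$-sphere $S_A$ inside the infinite-dimensional space $H_0^1(A)$, where the mildly annoying case is a pair of antipodal points, handled by routing a path through a third direction available by infinite-dimensionality. The disjoint-support construction above sidesteps that issue entirely, which is why I prefer it.
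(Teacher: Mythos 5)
Your proof is correct and complete. One point of comparison: the paper does not actually prove this lemma --- it is one of the statements for which the reader is referred to \cite{PisaniSiciliano2013} --- so there is no in-paper argument to measure against; what you give is a self-contained proof, and in fact it establishes strictly more than the lemma asserts. You exhibit each $\lambda\in(\inf_A q,\sup_A q)$ as an \emph{attained} value, i.e. $(\inf_A q, \sup_A q)\subset g_A(S_A)$, which is precisely the content of Proposition \ref{prop:M_is_not_empty} stated right after the lemma (and of which the lemma is then an immediate corollary). Two remarks. First, your disjoint-support interpolation is not a mere convenience but genuinely necessary: $A$ is an arbitrary open set, possibly disconnected, so there may be no point of $A$ at which $q=\lambda$, and hence no way to reach $\lambda$ by concentrating a single bump at a level-$\lambda$ point; interpolating between a bump where $q<\lambda$ and a bump where $q>\lambda$ is what makes the argument work, and the same device (spheres inside spans of disjointly supported functions) is exactly what the paper exploits later in Theorem \ref{thm:existence_of_sets_of_genus_k}. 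Second, a cosmetic ordering issue: the points $x_-$ and $x_+$ are guaranteed distinct only once $\varepsilon$ is small enough that $\inf_A q+\varepsilon\le\sup_A q-\varepsilon$, so strictly one should fix $\lambda$ and choose $\varepsilon\le\min\{\lambda-\inf_A q,\ \sup_A q-\lambda\}$ \emph{before} selecting $x_\pm$; your final choice of $\varepsilon$ does exactly this, so the proof is sound, but the construction should be presented in that order. Note also that the identity $g_A(u_\theta)=\cos^2\theta\, g_A(w_-)+\sin^2\theta\, g_A(w_+)$ makes continuity in $\theta$ evident by inspection, so the separate continuity of $g_A$ on $S_A$, while true and worth recording, is not actually needed for the route you prefer.
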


We can conclude the following:

\begin{proposition}
\label{prop:M_is_not_empty}
Let $A \subset \Omega$ be an open subset. If $\alpha \in (\inf_A q, \sup_A q)$ then there exists $u \in H_0^1(A)$ such that 
$$
\int_A u^2 dx = 1 \quad \text{ and } \quad \int_A qu^2 dx = \alpha. 
$$
\end{proposition}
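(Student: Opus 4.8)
The plan is to upgrade the conclusion of the preceding Lemma from $\overline{g_A(S_A)}$ to $g_A(S_A)$ itself, by showing that $g_A(S_A)$ is an interval whose endpoints are exactly $\inf_A q$ and $\sup_A q$. First I would check that $g_A$ is continuous on $S_A$ in the $H_0^1$-topology: writing $u^2 - v^2 = (u-v)(u+v)$ and applying Cauchy--Schwarz gives
$$
|g_A(u) - g_A(v)| \le \|q\|_\infty \, \|u-v\|_2 \, \|u+v\|_2,
$$
and since convergence in $H_0^1(A)$ implies convergence in $L^2(A)$, continuity follows (in fact $g_A$ is Lipschitz on $S_A$).

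Next I would establish that $S_A$ is path-connected in $H_0^1(A)$. For $u_0, u_1 \in S_A$ that are linearly independent in $L^2(A)$, the normalized segment
$$
\gamma(t) = \frac{(1-t)u_0 + t u_1}{\|(1-t)u_0 + t u_1\|_2}, \qquad t \in [0,1],
$$
is a well-defined continuous path in $S_A$, because its denominator never vanishes and depends continuously on $t$ in the $H_0^1$-norm. To join a linearly dependent pair (in particular the antipodal case $u_1 = -u_0$, where the segment above degenerates at $t = 1/2$), I would route through an auxiliary $v \in S_A$ chosen linearly independent from $u_0$; such a $v$ exists precisely because $H_0^1(A)$ is infinite-dimensional. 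Consequently $S_A$ is connected, and therefore $g_A(S_A)$ is a connected subset of $\mathbb R$, i.e. an interval.

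Finally I would identify this interval. Taking closures in the inclusion provided by the Lemma gives $[\inf_A q, \sup_A q] \subset \overline{g_A(S_A)}$, while the elementary inclusion $g_A(S_A) \subset [\inf_A q, \sup_A q]$ gives the reverse containment; hence $\overline{g_A(S_A)} = [\inf_A q, \sup_A q]$. It follows that $\inf g_A(S_A) = \inf_A q$ and $\sup g_A(S_A) = \sup_A q$, so the interval $g_A(S_A)$ contains the whole open interval $(\inf_A q, \sup_A q)$. As $\alpha$ lies in this open interval, there is some $u \in S_A$ with $g_A(u) = \alpha$, and this $u \in H_0^1(A)$ is exactly the function sought, satisfying $\int_A u^2\,dx = 1$ and $\int_A q u^2\,dx = \alpha$.

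I expect the connectedness of $S_A$ to be the only real obstacle, and within it the antipodal case, where the naive normalized segment collapses at its midpoint. The detour through a linearly independent auxiliary function --- available only because the dimension of $H_0^1(A)$ is infinite --- is the device that resolves this, and is what makes the argument work on a function space rather than on a sphere in low finite dimension.
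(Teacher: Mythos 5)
Your proof is correct, and no step in it fails: $g_A$ is indeed Lipschitz on $S_A$ for the $L^2$ distance (hence continuous in $H_0^1$), the sphere $S_A$ is path-connected since $\dim H_0^1(A)\geq 2$ (your detour through an auxiliary $v$ correctly handles the antipodal pair $u_1=-u_0$), and an interval contained in $[\inf_A q,\sup_A q]$ whose closure equals $[\inf_A q,\sup_A q]$ must contain the open interval. Bear in mind, however, that the paper gives no proof of this proposition at all: it is among the statements deferred to \cite{PisaniSiciliano2013}, and the text simply says ``we can conclude'' after the closure Lemma. Your argument is the natural rigorous formalization of that sentence, taking the Lemma as a black box and upgrading closure membership to attainment via connectedness. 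The argument in the cited reference is different in flavor: it is constructive rather than topological. One picks $x_1,x_2\in A$ with $q(x_1)<\alpha<q(x_2)$, takes normalized bump functions $u_1,u_2$ supported in disjoint small balls around these points so that $g_A(u_1)<\alpha<g_A(u_2)$, and follows the path $u_t=\sqrt{1-t}\,u_1+\sqrt{t}\,u_2$, which stays in $S_A$ precisely because the supports are disjoint; then $g_A(u_t)=(1-t)\,g_A(u_1)+t\,g_A(u_2)$, and the intermediate value theorem in the single real variable $t$ produces the desired $u$. That route proves the Lemma and the Proposition in one stroke, needs no discussion of connectedness of an infinite-dimensional sphere, and introduces the disjoint-support device that the paper reuses later for the genus estimates (Theorem \ref{thm:existence_of_sets_of_genus_k}). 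Your route buys generality and economy (any continuous functional on a connected constraint set has an interval as image), at the cost of relying on the unproved Lemma; the constructive route is self-contained and produces explicit elements of $M$. Both are sound.
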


In particular by taking $A=\Omega$ we get
\begin{theorem}
Assume that $\inf_\Omega q < \alpha < \sup_\Omega q$. Then $M$ is not empty.
\end{theorem}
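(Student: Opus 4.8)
The plan is to deduce this theorem directly from Proposition~\ref{prop:M_is_not_empty} by specializing the open subset $A$ to the whole domain $\Omega$. Recall that $M = S \cap N$ consists of those $u \in H_0^1(\Omega)$ satisfying simultaneously $\int_\Omega u^2\,dx = 1$ and $\int_\Omega q u^2\,dx = \alpha$; thus producing a single such $u$ is exactly what is needed to show $M \neq \emptyset$.

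\begin{proof}
Take $A = \Omega$ in Proposition~\ref{prop:M_is_not_empty}. Since $A = \Omega$ is itself an open subset of $\Omega$, and since by hypothesis
$$
\alpha \in \left(\inf_\Omega q, \sup_\Omega q\right) = \left(\inf_A q, \sup_A q\right),
$$
the proposition applies and yields a function $u \in H_0^1(\Omega)$ with
$$
\int_\Omega u^2\,dx = 1 \quad \text{ and } \quad \int_\Omega q u^2\,dx = \alpha.
$$
The first identity means $u \in S$ and the second means $u \in N$, so $u \in S \cap N = M$. Hence $M \neq \emptyset$.
\end{proof}

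There is essentially no obstacle here: the theorem is a corollary obtained by reading off the case $A=\Omega$ of the preceding proposition, and the entire content lies in the chain $\inf_A q = \inf_\Omega q$, $\sup_A q = \sup_\Omega q$ together with the identification of the two integral conditions with membership in $S$ and $N$ respectively. The only point worth a moment's care is confirming that $\Omega$ qualifies as an admissible choice of $A$, which is immediate since $\Omega$ is open in itself; all the genuine work—producing a function whose weighted integral hits the prescribed value $\alpha$ strictly between the extremes of $q$—has already been carried out in the Lemma and Proposition via the density argument $(\inf_A q, \sup_A q) \subset \overline{g_A(S_A)}$ combined with the intermediate value property of the continuous map $g_A$.
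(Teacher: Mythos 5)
Your proof is correct and is exactly the paper's own argument: the paper obtains this theorem by the single remark ``in particular by taking $A=\Omega$'' applied to Proposition~\ref{prop:M_is_not_empty}, which is precisely your specialization. Nothing further is needed.
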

%
%

Let us recall the definition of genus of Krasnoselki.
Given $A$ a closed and symmetric subset of some Banach space, with $0\notin A$, the {\sl genus }
of $A$ is denoted as $\gamma(A)$ and defined as the least integer  $k$
for which there exists  a continuous and even map $h:A\to \mathbb R^{k}\setminus\{0\}$. 
By definition it is $\gamma(\emptyset) = 0$ and if it is not possible to construct continuous odd maps from $A$
to any $\mathbb R^{k}\setminus\{0\}$, it is set $\gamma(A) = +\infty$.

 It is  known that the genus is a topological invariant (by odd homeomorphism)
and that the genus of the  sphere in $\mathbb R^{N}$ is $N$.

The next result says that  $M$ has subsets of arbitrarily large genus.

\begin{theorem}
\label{thm:existence_of_sets_of_genus_k}
Let $u_1, \ldots, u_k \in M$ be functions with disjoint supports and let 
$$
V_k = \langle u_1, \ldots, u_k \rangle
$$
be the space spanned by $u_1, \ldots, u_k$. Then $M\cap V_k$ is the $(k-1)$-dimensional sphere, hence $\gamma(M \cap V_k) = k$.
\end{theorem}

Now, it is natural if one raises the question of whether there exists such functions with disjoint supports for arbitrary $k$. The answer is positive:

\begin{theorem}
If \eqref{eq:necessary_condition_1} holds then for every $k \geq 2$ there exist $k$ functions $u_1, \ldots, u_k \in M$ with disjoint supports. Hence $\gamma(M) = + \infty$.
\end{theorem}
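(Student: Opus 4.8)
The plan is to reduce the statement to Proposition \ref{prop:M_is_not_empty} together with Theorem \ref{thm:existence_of_sets_of_genus_k}. The core of the argument is purely geometric: I would produce $k$ pairwise disjoint open subsets $A_1,\dots,A_k\subset\Omega$ on each of which $q$ oscillates across the value $\alpha$, i.e. $\inf_{A_i}q<\alpha<\sup_{A_i}q$. Granting this, Proposition \ref{prop:M_is_not_empty} applied to each $A_i$ yields a function $u_i\in H_0^1(A_i)$ with $\int_{A_i}u_i^2\,dx=1$ and $\int_{A_i}qu_i^2\,dx=\alpha$; extending $u_i$ by zero outside $A_i$ gives $u_i\in H_0^1(\Omega)$ with $u_i\in M$, and since $\operatorname{supp}u_i\subset A_i$ while the $A_i$ are disjoint, the functions $u_1,\dots,u_k$ have disjoint supports.

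For the construction of the sets I would set $U^-=\{x\in\Omega:q(x)<\alpha\}$ and $U^+=\{x\in\Omega:q(x)>\alpha\}$. By \eqref{eq:necessary_condition_1} and the continuity of $q$, both are nonempty open sets, hence each contains infinitely many points; I then choose $2k$ distinct points $a_1,\dots,a_k\in U^-$ and $b_1,\dots,b_k\in U^+$. Since $\Omega$ is a connected open subset of $\mathbb R^3$, I can join $a_i$ to $b_i$ by a smooth simple arc $\sigma_i\subset\Omega$, picking the arcs inductively so that they are pairwise disjoint; this is possible because removing finitely many arcs (sets of codimension two) from the connected open set $\Omega$ leaves it open and connected. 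The arcs $\sigma_i$ are compact and pairwise disjoint, so there is $\varepsilon>0$ for which the tubular neighborhoods $A_i:=\{x\in\mathbb R^3:\operatorname{dist}(x,\sigma_i)<\varepsilon\}$ are pairwise disjoint and contained in $\Omega$. Each $A_i$ is open and contains both $a_i$ and $b_i$, whence $\inf_{A_i}q\le q(a_i)<\alpha<q(b_i)\le\sup_{A_i}q$, which is exactly the hypothesis of Proposition \ref{prop:M_is_not_empty}.

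To conclude about the genus, with $u_1,\dots,u_k$ as above Theorem \ref{thm:existence_of_sets_of_genus_k} gives $\gamma(M\cap V_k)=k$ for $V_k=\langle u_1,\dots,u_k\rangle$. Since $M$ is closed, symmetric and $0\notin M$, its genus is defined, and any continuous odd map $M\to\mathbb R^{k-1}\setminus\{0\}$ would restrict to $M\cap V_k$ and force $\gamma(M\cap V_k)\le k-1$, a contradiction; hence $\gamma(M)\ge k$. As $k\ge 2$ is arbitrary, $\gamma(M)=+\infty$.

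The only real work is the geometric construction of the disjoint crossing sets $A_i$. The delicate point is the simultaneous disjointness of the $k$ neighborhoods together with the fact that each $A_i$ must genuinely straddle the level $\alpha$: a set lying entirely in $U^-$ or in $U^+$ would satisfy $\sup_{A_i}q\le\alpha$ or $\inf_{A_i}q\ge\alpha$ and would be useless. Working in $\mathbb R^3$ makes the disjointness essentially free, so I expect this step to be routine once stated carefully, while everything else is a direct application of the preceding results.
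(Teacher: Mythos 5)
Your proposal is correct, but it takes a genuinely different and heavier route than the one the paper relies on (the proof is omitted here and attributed to \cite{PisaniSiciliano2013}). The key structural point you did not exploit is that Proposition \ref{prop:M_is_not_empty} is stated for an \emph{arbitrary} open subset $A\subset\Omega$ --- connectedness is never required. Hence there is no need for your sets $A_i$ to be connected tubes: since $\{q<\alpha\}$ and $\{q>\alpha\}$ are nonempty open sets by \eqref{eq:necessary_condition_1} and the continuity of $q$, one can simply pick $2k$ pairwise disjoint small balls, $B_1^-,\dots,B_k^-\subset\{q<\alpha\}$ and $B_1^+,\dots,B_k^+\subset\{q>\alpha\}$, and set $A_i:=B_i^-\cup B_i^+$; each $A_i$ is open, the $A_i$ are pairwise disjoint, and trivially $\inf_{A_i}q<\alpha<\sup_{A_i}q$, so Proposition \ref{prop:M_is_not_empty} produces the desired $u_i\in M$ with disjoint supports. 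This two-balls-per-function trick makes the whole geometric part of your argument --- the choice of pairwise disjoint smooth arcs, the general-position claim that removing codimension-two sets preserves connectedness, and the tubular neighborhoods --- unnecessary, and it works in any dimension, whereas your arc construction is dimension-sensitive (and, as written, has small loose ends: the inductive choice of arcs must also avoid the remaining endpoints $a_j,b_j$, and $\varepsilon$ must be small enough that the \emph{closed} tubes are disjoint so the supports really do not meet). Your concluding genus argument --- $\gamma(M)\ge\gamma(M\cap V_k)=k$ via Theorem \ref{thm:existence_of_sets_of_genus_k} and monotonicity of the genus under restriction of odd maps --- is exactly the intended one.
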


Let
\begin{align*}
& G_1 : u \in H_0^1 (\Omega) \mapsto \int_{\Omega} u^2 dx - 1 \in \mathbb R, \\
& G_2 : u \in H_0^1(\Omega) \mapsto \int_{\Omega} qu^2 dx - \alpha \in \mathbb R
\end{align*}
and 
$$
G = (G_1, G_2).
$$
Then
$$
M = \cbr{u \in H_0^1(\Omega) \ : \ G_1(u) = G_2(u) = 0} = G^{-1}(0).
$$

We note that $G$ is of class $C^1$.

Let us show, for the reader convenience, that $G_1'(u)$ and $G_2'(u)$ are linearly independent, so $G$ will be a submersion and $M$ a submanifold of 
codimension $2$.

\begin{proposition}
Assume $M$ is not empty. The differentials $G_1'(u)$ and $G_2'(u)$ are linearly independent for every $u \in M$ if and only if 
\begin{equation}
\label{eq:measure_is_zero}
|q^{-1}(\alpha)| = 0.
\end{equation}
\end{proposition}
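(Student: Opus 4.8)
The plan is to compute the two differentials explicitly and reduce the linear (in)dependence of $G_1'(u)$ and $G_2'(u)$ to a pointwise condition on $q$ along the support of $u$. For $v \in H_0^1(\Omega)$ one has
\[
G_1'(u)[v] = 2\int_\Omega u\, v\,dx, \qquad G_2'(u)[v] = 2\int_\Omega q\,u\, v\,dx,
\]
so a nontrivial relation $\lambda G_1'(u) + \mu G_2'(u) = 0$ with $(\lambda,\mu)\neq(0,0)$ is equivalent to $\int_\Omega (\lambda + \mu q)\,u\, v\,dx = 0$ for every $v\in H_0^1(\Omega)$, i.e. to $(\lambda+\mu q)\,u = 0$ a.e. in $\Omega$. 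First I would observe that $\mu\neq 0$: otherwise $\lambda\neq0$ and $\lambda u =0$ forces $u=0$, contradicting $u\in S$. Dividing by $\mu$, the relation becomes $q = c$ a.e. on $\{u\neq 0\}$, where $c=-\lambda/\mu$ is a constant.

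Next I would pin down this constant. Multiplying $q = c$ by $u^2$ and integrating over $\{u\neq 0\}=\Omega$ (up to a null set), and using $u\in M$, gives
\[
\alpha = \int_\Omega q\,u^2\,dx = c\int_\Omega u^2\,dx = c,
\]
so necessarily $c=\alpha$. Hence the differentials are linearly dependent at $u$ \emph{if and only if} $q=\alpha$ a.e. on $\{u\neq 0\}$, that is, $\{u\neq 0\}\subseteq q^{-1}(\alpha)$ up to a set of measure zero.

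With this characterization the implication $\eqref{eq:measure_is_zero}\Rightarrow$ independence is immediate: if the differentials were dependent at some $u\in M$, then $\{u\neq0\}\subseteq q^{-1}(\alpha)$ up to a null set, while $\int_\Omega u^2\,dx=1$ forces $|\{u\neq 0\}|>0$; combining these yields $|q^{-1}(\alpha)|>0$, contradicting \eqref{eq:measure_is_zero}. Thus independence holds at every $u\in M$.

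For the converse I would argue by contraposition: assuming $|q^{-1}(\alpha)|>0$, I must exhibit a single $u\in M$ with $\{u\neq 0\}\subseteq q^{-1}(\alpha)$, since by the characterization above its differentials are then dependent. Observe that any nonzero $w\in H_0^1(\Omega)$ with $\{w\neq 0\}\subseteq q^{-1}(\alpha)$ automatically satisfies $\int_\Omega q\,w^2\,dx = \alpha\int_\Omega w^2\,dx$, so after normalizing $u=w/\|w\|_2$ one obtains $u\in M$ with dependent differentials. The main obstacle is precisely the construction of such a $w$: since $q$ is continuous, $q^{-1}(\alpha)$ is closed, but a priori a closed set of positive measure need not have nonempty interior, and building a nonzero $H_0^1$ function that vanishes a.e. on the open set $\{q\neq\alpha\}$ may be impossible when $q^{-1}(\alpha)$ is spread out. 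I would therefore try to locate inside $q^{-1}(\alpha)$ a subset on which a genuine $H_0^1$ bump can be placed and choose $w$ to be a smooth, compactly supported function there. This is the delicate step, and it is exactly where the continuity of $q$ and the positive-measure hypothesis must be exploited.
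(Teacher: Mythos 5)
Your first implication (that \eqref{eq:measure_is_zero} forces independence) is correct and is essentially the paper's own argument: from a nontrivial relation $\lambda G_1'(u)+\mu G_2'(u)=0$ you rule out $\mu=0$, identify the constant $-\lambda/\mu$ as $\alpha$ by testing against $u$ (the paper does the same identification first, evaluating the relation at $u$ to get $a+b\alpha=0$), and conclude that dependence at $u$ means $u=0$ a.e.\ on $\{q\neq\alpha\}$, which is incompatible with \eqref{eq:measure_is_zero} and $\|u\|_2=1$. One cosmetic slip: your parenthetical claim that $\{u\neq 0\}=\Omega$ up to a null set is neither justified nor needed; the identity $\int_\Omega qu^2\,dx=c\int_\Omega u^2\,dx$ follows simply because the integrand vanishes on $\{u=0\}$ and equals $cu^2$ a.e.\ on $\{u\neq 0\}$.

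The converse is where your proposal stops, and the gap you flag is genuine --- but you should know that it is equally a gap in the paper. The paper disposes of this step in one line: it asserts that $|q^{-1}(\alpha)|\neq 0$ implies $q^{-1}(\alpha)$ has nonempty interior, and then takes a normalized test function $u$ supported there, so that $qu=\alpha u$ and $G_2'(u)=\alpha G_1'(u)$. That assertion is false for a general continuous $q$: a level set of a continuous function can be closed, of positive measure, and with empty interior (take $C\subset[0,1]$ a fat Cantor set, $K=C\times[0,1]^2$ placed inside $\Omega$, and $q-\alpha$ proportional to $\operatorname{dist}(\cdot,K)$ times a continuous factor changing sign away from $K$, the sign change being what makes $M\neq\emptyset$). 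Worse, for such a $K$ no nonzero $H_0^1(\Omega)$ function can vanish a.e.\ outside $q^{-1}(\alpha)$: on a.e.\ line parallel to the first coordinate axis the restriction of an $H^1$ function is absolutely continuous and vanishes a.e.\ on the dense open complement of $C$, hence vanishes identically. By your own (correct) characterization of dependence, the differentials are then independent at \emph{every} $u\in M$ even though $|q^{-1}(\alpha)|>0$; so the ``only if'' direction cannot be proved in the generality stated, and the paper's proof is valid only under the unjustified implicit hypothesis that positive measure of $q^{-1}(\alpha)$ yields nonempty interior (or, more precisely, that $q^{-1}(\alpha)$ supports a nonzero $H_0^1$ function, which is exactly the reduction you arrived at). In short: your forward direction matches the paper; your refusal to wave hands on the converse is mathematically sounder than the paper's one-line dismissal, and the correct fix is to add such a hypothesis rather than to hunt for a construction that does not exist in general.
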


\begin{proof}
First, assume \eqref{eq:measure_is_zero}. We will show that $G_1'(u)$ and $G_2'(u)$ are linearly independent, for all $u \in M$. Suppose that there are $a, b \in \mathbb R$ such that 
$$
a G_1'(u) + bG_2'(u) = 0 \quad \text{ in } H^{-1} (\Omega)
$$ 
for some $u \in M$. Evaluating this expression in $u$, we find that $a + b \alpha = 0$. Then
$$
aG_1'(u)[v] + b G_2'(u)[v] = b \del{-\alpha \int_{\Omega} uv dx + \int_{\Omega} quv dx } = 0 \quad \forall v \in H_0^1(\Omega),
$$
that is, 
$$
b \int_{\Omega} (q - \alpha)uv dx = 0 \quad \forall v \in H_0^1 (\Omega).
$$
If $b \neq 0$ then we would have $(q - \alpha)u = 0$ a.e., and hence, in view of \eqref{eq:measure_is_zero}, $u = 0$, a contradiction. Thus $G_1'(u)$ and $G_2'(u)$ are linearly independent for all $u \in M$.

Now, suppose \eqref{eq:measure_is_zero} is not satisfied. Then $q^{-1}(\alpha)$ has not empty interior, hence there is some test function $u$ with support in $q^{-1}(\alpha)$ such that $||u||_2 = 1$. It is immediately seen that $u \in M$ because $qu = \alpha u$ and hence 
$$
G_2'(u) = \alpha G_1'(u),
$$
which completes the proof.
\end{proof}

\section{Variational setting}\label{sec:variational}

We now proceed to study the variational framework of the problem. Our aim is to construct a functional whose critical points will be the weak solutions of the problem.

Following \cite{GazzolaGrunauSweers2010}, let 
$$
V = \cbr{\xi \in H^2(\Omega) \ : \ \dpd{\xi}{\mathbf n} = 0 \text{ on } \partial \Omega}.
$$
We remark that $V$ is a closed subspace of $H^2(\Omega)$. Indeed, let $\{v_n\} \subset V$ such that $v_n \to v$ in $V$. Then $0 = \gamma_1(v_n) \to \gamma_1(v)$ and hence $\gamma_1(v) = 0$, where $\gamma_1$ denotes the trace operator which for smooth functions gives the directional derivative in the direction of the exterior normal on the boundary. Being a closed subspace, $V$ inherits the Hilbert space structure of $H^2(\Omega)$.

Recall that 
$$
\varphi = \phi - \chi - \mu
$$
where 
$$
\mu = \frac{1}{|\Omega|} \int_{\Omega} \phi dx.
$$
In this way, we have $\overline \varphi = 0$,
where from now on, given a function $f$, we denote with $\overline f$ its average in $\Omega$. 
 Consider then the following natural decomposition of $V$:
\begin{equation}
\label{eq:decomposition_of_V}
V = \widetilde V \oplus \mathbb R
\end{equation}
where 
$$
\widetilde V = \cbr{\eta \in V \ : \ \overline \eta = 0}.
$$
On $\widetilde V$ we have the equivalent norm
$$
||\eta||_{\widetilde V} = \del{||\nabla \eta||_2^2 + ||\Delta \eta||_2^2}^{1/2}.
$$

Consider the functional $F:H_0^1(\Omega) \times H^2 (\Omega)$ defined below:
\begin{align*}
F(u, \varphi) = & \frac12 \int_{\Omega} |\nabla u|^2 dx + \frac12 \int_{\Omega} q(\varphi + \chi)u^2 dx - \frac{\kappa}{p} \int_{\Omega} |u|^p dx\\
& - \frac{1}{4} \int_{\Omega} (\Delta \varphi)^2 dx - \frac{1}{4} \int_{\Omega} |\nabla \varphi|^2 dx - \frac{\alpha}{2 |\Omega|} \int_{\Omega} \varphi dx.
\end{align*}
It is easy to see that this functional is of class $C^1$ and that given $u \in H_0^1(\Omega)$ and $\varphi \in H^2 (\Omega)$ we have
\begin{align*}
F'_u(u, \varphi)[v] & = \int_{\Omega} \nabla u  \nabla v dx + \int_{\Omega} q(\varphi + \chi)uv dx  - \kappa \int_{\Omega} |u|^{p - 2}u v dx \\
F'_\varphi(u, \varphi)[\xi] &= \frac12 \int_{\Omega} q \xi u^2 dx - \frac12 \int_{\Omega} \Delta \varphi \Delta \xi dx - \frac12 \int_{\Omega} \nabla \varphi  \nabla \xi dx - \frac{\alpha}{2|\Omega|}\int_{\Omega} \xi dx   
\end{align*}
for every $v \in H_0^1(\Omega)$ and $\xi \in H^2(\Omega)$.

Then,
$(u, \varphi, \omega, \mu)\in H^1_0(\Omega)\times H^2(\Omega)\times \mathbb R\times \mathbb R$ is a weak solution to \eqref{eq:new_problem_1}-\eqref{eq:new_problem_7} if
and only if 
\begin{equation}\label{eq:defsol}
\begin{array}{ll}
& (u, \varphi) \in M \times \widetilde V, \\
& \forall v \in H_0^1 (\Omega):  \ F_u'(u, \varphi)[v] =\displaystyle \omega \int_{\Omega} uv dx - \mu \int_{\Omega} q u v dx, \\
& \forall \xi \in V: \ F_\varphi'(u, \varphi)[\xi] = 0.
\end{array}
\end{equation}

\begin{theorem}
Let $(u, \varphi) \in H_0^1(\Omega) \times H^2 (\Omega)$. Then there exist $\omega, \mu \in \mathbb R$ such that $(u, \varphi, \omega, \mu)$ is a solution to \eqref{eq:new_problem_1}-\eqref{eq:new_problem_7} if and only if $(u, \varphi)$ is a critical point of $F$ constrained on $M \times \widetilde V$, in which case the real constants $\omega, \mu$ are the two Lagrange multipliers with respect to $F'_u$.
\end{theorem}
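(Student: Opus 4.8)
The plan is to unwind the definition of a constrained critical point on the product $M \times \widetilde V$ and match it term by term against the weak-solution conditions in \eqref{eq:defsol}. Since $\widetilde V$ is a closed linear subspace (hence a flat submanifold equal to its own tangent space) and $M = G^{-1}(0)$ is a submanifold, the product $M \times \widetilde V$ is a submanifold of $H_0^1(\Omega) \times H^2(\Omega)$ whose tangent space at $(u,\varphi)$ factors as $T_u M \times \widetilde V$. The total derivative $F'(u,\varphi)[v,\xi] = F_u'(u,\varphi)[v] + F_\varphi'(u,\varphi)[\xi]$ annihilates this tangent space if and only if it annihilates each factor separately, since $v$ and $\xi$ vary independently. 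Thus $(u,\varphi)$ is a constrained critical point precisely when $F_u'(u,\varphi)$ vanishes on $T_u M$ and $F_\varphi'(u,\varphi)$ vanishes on $\widetilde V$. I would establish each of these two conditions is equivalent to the corresponding line of \eqref{eq:defsol}.

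First I would treat the $u$-direction by the Lagrange multiplier rule. By the preceding proposition, condition \eqref{eq:measure_is_zero} guarantees that $G_1'(u)$ and $G_2'(u)$ are linearly independent for every $u \in M$, so $G=(G_1,G_2)$ is a submersion and $T_u M = \ker G_1'(u) \cap \ker G_2'(u)$. Hence $F_u'(u,\varphi)$ vanishes on $T_u M$ if and only if there exist real $\lambda_1,\lambda_2$ with $F_u'(u,\varphi) = \lambda_1 G_1'(u) + \lambda_2 G_2'(u)$. Since $G_1'(u)[v] = 2\int_\Omega uv\,dx$ and $G_2'(u)[v] = 2\int_\Omega quv\,dx$, setting $\omega = 2\lambda_1$ and $\mu = -2\lambda_2$ converts this identity into exactly the second line of \eqref{eq:defsol}, exhibiting $\omega,\mu$ as the Lagrange multipliers associated with $F_u'$.

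Next I would handle the $\varphi$-direction, where the only delicate point arises. The constrained condition gives $F_\varphi'(u,\varphi)[\xi] = 0$ merely for $\xi \in \widetilde V$, whereas the third line of \eqref{eq:defsol} demands it for all $\xi \in V$. Using the splitting $V = \widetilde V \oplus \mathbb R$ from \eqref{eq:decomposition_of_V}, by linearity it suffices to check the constant direction $\xi \equiv 1$. Substituting $\xi = 1$ into the formula for $F_\varphi'$ and using $\Delta 1 = 0$ and $\nabla 1 = 0$ leaves
$$
F_\varphi'(u,\varphi)[1] = \frac12 \int_\Omega qu^2\,dx - \frac{\alpha}{2},
$$
which vanishes precisely because $u \in M \subset N$ satisfies the compatibility condition $\int_\Omega qu^2\,dx = \alpha$. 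Therefore the $\widetilde V$-condition automatically extends to all of $V$, and conversely the $V$-condition restricts to $\widetilde V$, so the two are equivalent on the constraint.

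Running these equivalences in both directions yields the full biconditional: a constrained critical point produces multipliers $\omega,\mu$ making $(u,\varphi,\omega,\mu)$ a weak solution, and any weak solution is in particular such a critical point. I expect the main obstacle to be the reconciliation between the $\widetilde V$-constraint in the potential variable and the full test space $V$ required by \eqref{eq:defsol}; everything hinges on the observation that membership of $u$ in $N$ is exactly the compatibility condition that kills the constant mode of $F_\varphi'$.
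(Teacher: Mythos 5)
Your proposal is correct and follows essentially the same route as the paper: identify the tangent space of $M\times\widetilde V$ as $T_uM\times\widetilde V$, apply the Lagrange multiplier rule in the $u$-direction (justified by the linear independence of $G_1'(u)$ and $G_2'(u)$), and extend the $\varphi$-condition from $\widetilde V$ to $V$ via the splitting $V=\widetilde V\oplus\mathbb R$. In fact you are slightly more explicit than the paper, which merely asserts $F_\varphi'(u,\varphi)[r]=0$ for constants $r$, whereas you spell out that this vanishing is precisely the compatibility condition $\int_\Omega qu^2\,dx=\alpha$ coming from $u\in M\subset N$.
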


\begin{proof}
Indeed  $(u, \varphi)$ is a critical point of $F$ constrained on $M \times \widetilde V$ if 
and only if
\begin{align*}
& \forall v \in T_uM:  \ F_u'(u, \varphi)[v] = 0, \\
& \forall \xi \in \widetilde V: \ F_\varphi'(u, \varphi)[\xi] = 0.
\end{align*}
Note that the tangent space to $\widetilde V$ at $\varphi$ is $\widetilde V$ itself.


Then a weak solution, according to \eqref{eq:defsol} and the Lagrange multipliers rule,
 is a constrained critical point. 

Suppose on the contrary that $(u, \varphi)$ is a constrained critical point. Then, 
again by the  Lagrange multipliers rule, we have that there exists $\omega, \mu\in \mathbb R$
such that 
$$
\forall v\in H^1_0(\Omega): F_u'(u, \varphi)[v] = \omega \int_{\Omega} uv dx - \mu \int_{\Omega} quv dx.
$$
It remains to prove that $F_\varphi'(u, \varphi)[\xi] = 0$ for all $\xi \in V$. But this follows by the 
 decomposition \eqref{eq:decomposition_of_V}, noticing that $F_\varphi'(u, \varphi)[r] = 0$ for every constant $r \in \mathbb R$.
 Then \eqref{eq:defsol} is satisfied and this concludes the proof.
\end{proof}

The functional $F$ constrained on $M \times \widetilde V$ is unbounded from above and from below. 
 This issue has been addressed by Benci and Fortunato \cite{BenciFortunato1998} and in many subsequent papers. Their standard reduction argument goes as follows:


\begin{enumerate}
\item[(i)] For every fixed $u\in H^{1}_{0}(\Omega)$ there exists a unique $\Phi(u)$ such that $F'_\varphi(u,  \Phi(u)) = 0$.

\item[(ii)] The map $u \mapsto \Phi(u)$ is of class $C^1$.

\item[(iii)] The graph of $\Phi$ is a manifold, and we are reduced to study the functional $J(u) = F(u, \Phi(u)$, possibly constrained.
\end{enumerate}

However the method sketched above fails in our situation, for two reasons. First, we see that $F_\varphi'(u, \varphi) = 0$ with $\varphi \in \widetilde V$ is just 
\begin{align*}
\Delta^2 \varphi - \Delta \varphi - qu^2 + \alpha/|\Omega| &= 0 \quad \text{ in } \ \Omega, \\
\dpd{\varphi}{\mathbf n} &= 0 \quad \text{ on } \ \partial \Omega, \\
\dpd{\Delta \varphi}{\mathbf n} &= 0 \quad \text{ on } \ \partial \Omega, \\
\int_{\Omega} \varphi dx &= 0.
\end{align*}
The problem above has not a unique solution  for any fixed $u$: this happens,
due to the compatibility condition,
 if and only if $u \in N$. 
 Moreover,
 since $N$ is not a manifold (unless $\alpha \neq 0$) we 
 cannot require the map $\Phi: u \mapsto \Phi(u)$ to be of class $C^1$ in $N$. We shall then extend such a map $\Phi$.

\begin{proposition}
\label{prop:auxiiary_problem_to_extend_Phi}
For every $w \in L^{6/5}(\Omega)$ there exists a unique $L(w) \in \widetilde V$ solution of
\begin{align*}
\Delta^2 \varphi - \Delta \varphi - w + \overline w &= 0 \quad \text{ in } \ \Omega, \\
\dpd{\varphi}{\mathbf n} &= 0 \quad \text{ on } \ \partial \Omega, \\
\dpd{\Delta \varphi}{\mathbf n} &= 0 \quad \text{ on } \ \partial \Omega, \\
\int_{\Omega} \varphi dx &= 0.
\end{align*} 
The map $L: L^{6/5}(\Omega) \longrightarrow \widetilde V$ is linear and continuous, hence of class $C^{\infty}$.
\end{proposition}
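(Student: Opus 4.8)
The plan is to recast the boundary value problem in weak form on the Hilbert space $\widetilde V$ and then invoke the Lax--Milgram theorem (here simply the Riesz representation theorem, since the relevant bilinear form is symmetric). The crucial structural observation, which makes coercivity immediate, is that the bilinear form naturally associated with the operator $\Delta^2 - \Delta$ under the prescribed boundary conditions coincides with the inner product that induces the equivalent norm $\|\cdot\|_{\widetilde V}$.

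First I would derive the weak formulation. Testing the equation against $\xi \in \widetilde V$ and integrating by parts twice in the biharmonic term and once in the Laplacian term, the boundary integrals that arise involve $\partial_{\mathbf n}\Delta\varphi$, $\partial_{\mathbf n}\varphi$ and $\partial_{\mathbf n}\xi$; all three vanish, the first two by the conditions $\partial_{\mathbf n}\Delta\varphi = 0$ and $\partial_{\mathbf n}\varphi = 0$, and the last because $\xi \in V$. Since moreover $\xi$ has zero average, the term $\int_\Omega \overline w\,\xi\,dx$ drops out. One is thus led to seek $\varphi \in \widetilde V$ with
$$
a(\varphi, \xi) := \int_\Omega \Delta\varphi\,\Delta\xi\,dx + \int_\Omega \nabla\varphi\cdot\nabla\xi\,dx = \int_\Omega w\,\xi\,dx \qquad \text{for all } \xi \in \widetilde V.
$$
The form $a$ is exactly the inner product on $\widetilde V$ attached to $\|\cdot\|_{\widetilde V}$, hence bounded and coercive with constant one. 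For the right-hand side I would set $\ell_w(\xi) = \int_\Omega w\,\xi\,dx$ and estimate it by H\"older's inequality with conjugate exponents $6/5$ and $6$, together with the Sobolev embedding $\widetilde V \subset H^2(\Omega) \subset H^1(\Omega) \hookrightarrow L^6(\Omega)$ valid in dimension three: this gives $|\ell_w(\xi)| \le \|w\|_{6/5}\,\|\xi\|_6 \le c\,\|w\|_{6/5}\,\|\xi\|_{\widetilde V}$, so $\ell_w \in \widetilde V'$. This is precisely where the choice of the exponent $6/5$, the H\"older dual of the Sobolev exponent $6$, enters. The Riesz representation theorem then produces a unique $L(w)=\varphi \in \widetilde V$ solving the weak problem.

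Finally, linearity of $L$ follows because $w \mapsto \ell_w$ is linear and the Riesz isomorphism is linear, while continuity follows from the estimate above, which reads $\|L(w)\|_{\widetilde V} \le c\,\|w\|_{6/5}$; a bounded linear operator between Banach spaces is automatically of class $C^{\infty}$, giving the last claim. The one point that genuinely requires care is the integration-by-parts step: one must verify that the test space $\widetilde V$ encodes the \emph{essential} conditions $\partial_{\mathbf n}\varphi = 0$ and $\overline\varphi = 0$, while $\partial_{\mathbf n}\Delta\varphi = 0$ is recovered as a \emph{natural} boundary condition, so that the abstract solution furnished by Riesz is genuinely a solution of the stated problem. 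Everything else is routine.
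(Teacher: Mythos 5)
Your proposal is correct and follows essentially the same route as the paper: both pass to the weak formulation on $\widetilde V$, observe that the bilinear form $\int_\Omega \Delta\varphi\,\Delta\xi\,dx + \int_\Omega \nabla\varphi\cdot\nabla\xi\,dx$ is exactly the scalar product inducing $\|\cdot\|_{\widetilde V}$, and conclude by the Riesz representation theorem. You merely spell out details the paper leaves implicit (the vanishing of the boundary terms and of the $\overline w$ term, the H\"older--Sobolev bound $|\ell_w(\xi)| \le c\,\|w\|_{6/5}\|\xi\|_{\widetilde V}$ showing $\ell_w \in \widetilde V'$, and the estimate $\|L(w)\|_{\widetilde V} \le c\,\|w\|_{6/5}$ giving continuity), which is a welcome but not a different argument.
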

\begin{proof}
The weak solutions to the problem are functions $\varphi$ in the Hilbert space  $\widetilde V$ such that 
$$
\int_{\Omega} \Delta \varphi \Delta v dx + \int_{\Omega} \nabla \varphi  \nabla v dx - \int_{\Omega} wv dx = 0 \qquad \forall v \in \widetilde V.
$$
So the result follows by applying the Riesz Theorem since
the bilinear form $b: \widetilde V \times \widetilde V \longrightarrow \mathbb R$ given by
$$
b(\varphi, v) = \int_{\Omega} \Delta \varphi \Delta v dx + \int_{\Omega} \nabla \varphi  \nabla v dx.
$$
is just the  scalar product in $\widetilde V$. 
\end{proof}

The following proposition follows from well-known properties of Nemytsky operators.

\begin{proposition}
The map
$$
u \in L^6(\Omega) \mapsto qu^2 \in L^{6/5}(\Omega)
$$
is of class $C^1$.
\end{proposition}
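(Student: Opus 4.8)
The plan is to identify the differential explicitly, upgrade the resulting Gâteaux derivative to a Fréchet derivative by controlling the second-order remainder, and then check that the derivative depends continuously on $u$. Since $q \in L^\infty(\Omega)$ by \eqref{eq:continuity}, every estimate will reduce to Hölder's inequality with Sobolev-type exponents. Write $T(u) = qu^2$. First I would record that $T$ is well defined: if $u \in L^6(\Omega)$ then $u^2 \in L^3(\Omega)$, so $qu^2 \in L^3(\Omega) \hookrightarrow L^{6/5}(\Omega)$ because $\Omega$ is bounded.

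The natural candidate for the differential at $u$ is the linear map $T'(u)[v] = 2quv$. I would first check that it is a bounded operator from $L^6(\Omega)$ to $L^{6/5}(\Omega)$: by Hölder's inequality (with $\tfrac16 + \tfrac16 = \tfrac13$) one has $uv \in L^3(\Omega)$, whence
$$
\|2quv\|_{6/5} \leq c\,\|2quv\|_3 \leq c\,\|q\|_\infty\,\|u\|_6\,\|v\|_6,
$$
so that $\|T'(u)\|_{\mathcal L(L^6, L^{6/5})} \leq c\,\|q\|_\infty\,\|u\|_6 < \infty$.

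Next I would verify Fréchet differentiability, exploiting the fact that the remainder is exactly computable:
$$
T(u+v) - T(u) - T'(u)[v] = q(u+v)^2 - qu^2 - 2quv = qv^2.
$$
Arguing as above, $\|qv^2\|_{6/5} \leq c\,\|q\|_\infty\,\|v^2\|_3 = c\,\|q\|_\infty\,\|v\|_6^2$, which is $o(\|v\|_6)$ as $\|v\|_6 \to 0$; this proves that $T$ is Fréchet differentiable at every $u$ with differential $T'(u)$.

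Finally, for the continuity of $u \mapsto T'(u)$, I would use that $T'(u)[v] - T'(w)[v] = 2q(u-w)v$, so the same Hölder estimate yields $\|T'(u) - T'(w)\|_{\mathcal L(L^6, L^{6/5})} \leq c\,\|q\|_\infty\,\|u - w\|_6$; thus $T'$ is in fact Lipschitz and a fortiori continuous, which establishes that $T$ is of class $C^1$. I do not expect a genuine obstacle here; the only care required is the bookkeeping of conjugate exponents, namely that $L^6(\Omega) \cdot L^6(\Omega) \hookrightarrow L^3(\Omega) \hookrightarrow L^{6/5}(\Omega)$ on a bounded domain, which is exactly the content of the \emph{well-known properties of Nemytsky operators} invoked in the statement.
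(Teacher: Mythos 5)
Your proof is correct. Note, however, that the paper itself offers no argument at all for this proposition: it simply states that it ``follows from well-known properties of Nemytsky operators'' (the standard reference being Vainberg's book, which is in the bibliography). So your write-up is not a variant of the paper's proof but a self-contained replacement for a citation. The comparison is favorable to you in this instance: because the map $T(u)=qu^2$ is a quadratic polynomial in $u$, the Taylor remainder is \emph{exactly} $qv^2$, so Fr\'echet differentiability and the Lipschitz continuity of $u\mapsto T'(u)$ follow from a single H\"older estimate, with no need for the general machinery of superposition operators (which is designed to handle general nonlinearities $f(x,u)$ and requires growth and Carath\'eodory hypotheses). Your exponent bookkeeping is right: $\|uv\|_3\le\|u\|_6\|v\|_6$ and $L^3(\Omega)\hookrightarrow L^{6/5}(\Omega)$ on the bounded domain $\Omega$, and $q\in C(\overline\Omega)\subset L^\infty(\Omega)$ is exactly what makes the constant finite. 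The only thing the citation buys that your argument does not is generality; for this specific quadratic map your elementary computation is the cleaner and more informative route, and it even yields the stronger conclusion that $T'$ is globally Lipschitz, not merely continuous.
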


As a consequence of the previous propositions, we can define the following map:

$$
\Phi: u \in H_0^1 (\Omega) \mapsto L(qu^2) \in \widetilde V.
$$

It is clear that 
$$
\Phi(u) = \Phi(-u) = \Phi(|u|).
$$
Moreover, for every $(u, \varphi) \in H_0^1(\Omega) \times \widetilde V$ we have that $\varphi = \Phi(u)$ if and only if for every $\eta \in \widetilde V$
\begin{equation*}
\int_{\Omega} \Delta \varphi \Delta \eta dx + \int_{\Omega} \nabla \varphi \nabla \eta dx= \int_{\Omega} qu^2 \eta dx.
\end{equation*}
Taking $\eta = \Phi(u)$ we have  in particular the important relation
\begin{equation}
\label{eq:identity_2}
\int_{\Omega} (\Delta \Phi(u))^2 dx + \int_{\Omega} |\nabla \Phi(u)|^2 dx =  \int_{\Omega} qu^2 \Phi(u) dx.
\end{equation}
The right hand side above is the {\sl interaction energy} term. 
Then we infer
\begin{align*}
||\Phi(u)||_{\widetilde V}^2 & \leq ||q||_\infty \int_{\Omega} u^2 \Phi(u) dx \\
& \leq c ||u||_4^2 ||\Phi(u)||_2 \\
& \leq c ||\nabla u||_2^2 ||\nabla \Phi(u)||_2 \\
& \leq c ||\nabla u||_2^2 ||\Phi(u)||_{\widetilde V}
\end{align*}
and hence 
\begin{equation}
\label{eq:boundedness_of_Phi}
||\Phi(u)||_{\widetilde V} \leq c ||\nabla u||_2^2,
\end{equation}
that is, $\Phi$ is bounded on bounded sets. 
We have

\begin{lemma}
\label{lem:compactness_of_Phi}
If $u_n \rightharpoonup u$ in $H_0^1(\Omega)$ then
$$
\int_{\Omega} q u_n^2 \Phi(u_n) dx \to \int_{\Omega} q u^2 \Phi(u) dx.
$$
Moreover the map $\Phi$ is compact.
\end{lemma}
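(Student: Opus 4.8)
The plan is to reduce the whole statement to the strong $L^{6/5}$-convergence of the source terms $qu_n^2$ together with the continuity of the linear map $L$ from Proposition \ref{prop:auxiiary_problem_to_extend_Phi}. The crux of the matter is upgrading the weak convergence $u_n \rightharpoonup u$ in $H_0^1(\Omega)$ to strong convergence in $L^4(\Omega)$; everything after that is continuity of a linear operator and Hölder estimates.

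First I would exploit the \emph{compact} Sobolev embedding $H_0^1(\Omega) \hookrightarrow L^4(\Omega)$, which is valid since $4 < 6$ in dimension three. Thus $u_n \rightharpoonup u$ in $H_0^1(\Omega)$ yields $u_n \to u$ strongly in $L^4(\Omega)$. Writing $u_n^2 - u^2 = (u_n - u)(u_n + u)$ and applying Hölder's inequality gives $\|u_n^2 - u^2\|_2 \leq \|u_n - u\|_4\,\|u_n + u\|_4 \to 0$, where $\{u_n\}$ is bounded in $H_0^1(\Omega)$, hence in $L^4(\Omega)$, by the weak convergence. Since $q \in L^\infty(\Omega)$ and $\Omega$ is bounded, it follows that $qu_n^2 \to qu^2$ in $L^2(\Omega)$, and therefore in $L^{6/5}(\Omega)$ by the continuous inclusion $L^2(\Omega) \hookrightarrow L^{6/5}(\Omega)$.

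Next, because $L\colon L^{6/5}(\Omega) \to \widetilde V$ is linear and continuous and $\Phi(u) = L(qu^2)$ by definition, I would immediately deduce
$$
\Phi(u_n) = L(qu_n^2) \longrightarrow L(qu^2) = \Phi(u) \quad \text{strongly in } \widetilde V.
$$
This is exactly the asserted compactness of $\Phi$: every weakly convergent sequence in $H_0^1(\Omega)$ is sent to a strongly convergent sequence in $\widetilde V$.

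Finally, for the convergence of the interaction energies I would split
$$
\int_{\Omega} qu_n^2 \Phi(u_n)\,dx - \int_{\Omega} qu^2 \Phi(u)\,dx = \int_{\Omega} qu_n^2 \bigl(\Phi(u_n) - \Phi(u)\bigr)\,dx + \int_{\Omega} \bigl(qu_n^2 - qu^2\bigr)\Phi(u)\,dx.
$$
Using the continuous embedding $\widetilde V \hookrightarrow H^2(\Omega) \hookrightarrow L^6(\Omega)$ and Hölder's inequality with the conjugate exponents $6/5$ and $6$, the first integral is bounded by $\|qu_n^2\|_{6/5}\,\|\Phi(u_n) - \Phi(u)\|_6$ and the second by $\|qu_n^2 - qu^2\|_{6/5}\,\|\Phi(u)\|_6$. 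Both vanish in the limit: $\|qu_n^2\|_{6/5}$ stays bounded by the previous step, $\|\Phi(u_n) - \Phi(u)\|_6 \to 0$ by the strong $\widetilde V$-convergence just established, and $\|qu_n^2 - qu^2\|_{6/5} \to 0$ by the first step. I do not anticipate any genuine difficulty: the single essential ingredient is the compactness of the embedding $H_0^1(\Omega) \hookrightarrow L^4(\Omega)$, and the rest is routine.
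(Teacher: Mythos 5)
Your proof is correct, but it takes a genuinely different (and more direct) route than the paper. You establish strong convergence $\Phi(u_n) \to \Phi(u)$ in $\widetilde V$ \emph{first}, as an immediate consequence of the strong convergence $qu_n^2 \to qu^2$ in $L^{6/5}(\Omega)$ together with the boundedness of the linear solution operator $L$ from Proposition \ref{prop:auxiiary_problem_to_extend_Phi}, and then deduce the convergence of the interaction energy by a H\"older splitting. The paper proceeds in the opposite order: it shows that the functionals $B_n(\eta) = \int_{\Omega} q u_n^2 \eta \, dx$ converge in the dual of $\widetilde V$ to $B(\eta) = \int_{\Omega} q u^2 \eta \, dx$ (using the same compact-embedding ingredient, $u_n^2 \to u^2$ in $L^{6/5}(\Omega)$), separately shows that $\Phi(u_n) \rightharpoonup \Phi(u)$ weakly in $\widetilde V$, combines the two to obtain the convergence of the interaction energies $B_n(\Phi(u_n)) \to B(\Phi(u))$, and only then upgrades to strong convergence of $\Phi(u_n)$ by pairing the weak convergence with the norm convergence $\|\Phi(u_n)\|_{\widetilde V} \to \|\Phi(u)\|_{\widetilde V}$ supplied by the identity \eqref{eq:identity_2}. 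Your argument exploits the explicit factorization $\Phi = L \circ (u \mapsto qu^2)$, which the paper constructs but does not use at this point; this shortens the proof and makes the compactness of $\Phi$ transparent (a completely continuous map composed with a bounded linear operator). The paper's weak-convergence-plus-energy-identity scheme is the standard pattern in Schr\"odinger--Poisson type problems and is what one must fall back on when the potential is defined only implicitly rather than through a linear operator on a Lebesgue space; here it is dispensable. One pedantic remark: compactness of the nonlinear map $\Phi$ means that bounded sets are mapped to relatively compact sets, while what you prove is complete continuity (weakly convergent sequences go to strongly convergent ones); the former follows from the latter because $H_0^1(\Omega)$ is reflexive, and this is the same identification the paper implicitly makes.
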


\begin{proof}
Let $u_n \rightharpoonup u$ in $H_0^1(\Omega)$ and define $B_n, B: \widetilde V \longrightarrow \mathbb R$ by 
$$
B_n(\eta) := \int_{\Omega} qu_n^2\eta dx, \qquad B(\eta) := \int_{\Omega} qu^2\eta dx.
$$
Such operators are continuous due to the H\"{o}lder's inequality. For example:
\begin{align*}
\envert{\int_{\Omega} q u^2 \eta dx} &
\leq ||q||_\infty ||u||_4^2 ||\eta||_2 \leq c ||\nabla \eta||_2 \leq c ||\eta||_{\widetilde V}
\end{align*}
(where here $c$ depends on $u$).

Due to the compact embedding of $H^{1}_{0}(\Omega)$ into $L^{p}(\Omega)$ for $p\in[1,6)$,
we get $u_{n}^{2} \to u^{2}$ in $L^{6/5}(\Omega)$
%
and then
\begin{align*}
|B_n(\eta) - B(\eta)| & \leq ||q||_\infty ||u_n^2-u^2||_{6/5} ||\eta||_6 \\
& \leq c ||q||_\infty ||u_n^2-u^2||_{6/5} ||\eta||_{\widetilde V}.
\end{align*}
Hence
$$
||B_n - B|| \leq \sup_{\eta \neq 0} \frac{c ||u_n^2 - u^2||_{6/5} ||\eta||_{\widetilde V}}{||\eta||_{\widetilde V}} \to 0,
$$
namely $B_n \to B$ as operators in  $\widetilde V$. 

On the other hand, we have that $\Phi(u_n) \rightharpoonup \Phi(u)$ in $\widetilde V$. Indeed, let $g \in 
\widetilde{ V}^{'}$. Then there is some $v_g \in \widetilde V$ such that 
$$
g(\Phi(u_n)) = \int_{\Omega} \nabla \Phi(u_n)  \nabla v_g dx + \int_{\Omega} \Delta \Phi(u) \Delta v_g dx = \int_{\Omega} q u_n^2 v_g dx.
$$
But then
\begin{align*}
g(\Phi(u_n)) - g(\Phi(u)) & = \int_{\Omega} q (u_n^2 - u^2) v_g dx \\
& \leq ||q||_\infty ||u_n^2 - u^2||_2 ||v_g||_2 \to 0
\end{align*}
since $u_n^2 \to u^2$ in $L^2(\Omega)$ as well.

We then conclude that 
$$
\int_{\Omega} q u_n^2 \Phi(u_n) dx \to \int_{\Omega} q u^2 \Phi(u) dx 
$$
and by  \eqref{eq:identity_2} that $\| \Phi(u_{n})\|_{\widetilde V} \to \|\Phi(u)\|_{\widetilde V}$.
Consequently  $\Phi(u_n) \to \Phi(u)$ in $\widetilde V$. 
\end{proof}

Note that for every $u \in N$ we have that $F_\varphi'(u, \Phi(u)) = 0$. Indeed, $\Phi(u)$ is the unique solution to the problem in Proposition \ref{prop:auxiiary_problem_to_extend_Phi} with $w = qu^2$.

We now define the reduced functional of a single variable:
$$
\fullfunction{J}{H_0^1(\Omega)}{\mathbb R}{u}{F(u, \Phi(u))}
$$
With the notation $\varphi_u := \Phi(u)$ the functionl $J$ is explicitly given by (recall \eqref{eq:identity_2})
\begin{align*}
J(u) & = \frac12 \int_{\Omega} |\nabla u|^2 dx + \frac12 \int_{\Omega} q \varphi_u u^2 dx + \frac12 \int_{\Omega} q \chi u^2 dx - \frac{\kappa}{p} \int_{\Omega} |u|^p dx \\
& \quad - \frac{1}{4} \int_{\Omega} (\Delta \varphi_u)^2 dx - \frac{1}{4} \int_{\Omega} |\nabla \varphi_u|^2 dx - \frac{\alpha}{2 |\Omega|}\int_{\Omega} \varphi_u dx \\
& = \frac12 \int_{\Omega} |\nabla u|^2 dx + \frac14 \int_{\Omega} (\Delta \varphi_u)^2 dx + \frac14 \int_{\Omega} |\nabla \varphi_u|^2 dx + \int_{\Omega} q \chi u^2 dx \\
& \quad - \frac{\kappa}{p} \int_{\Omega} |u|^p dx.
\end{align*}

We note that $J$ is of class $C^1$ on $H_0^1 (\Omega)$ and even. Moreover, for every $u \in M$ we have that 
$$
J'(u)[v] = F_u'(u, \varphi_u)[v] + F_\varphi'(u, \varphi_u)[\Phi'(u)[v]] = F_u'(u, \varphi_u)[v] \quad \forall v \in H_0^1(\Omega)
$$
and hence we deduce the following

\begin{theorem}
The pair $(u, \varphi) \in M \times \widetilde V$ is a critical point of $F$ constrained on $M \times \widetilde V$ if and only if $u$ is a critical point of $J|_M$ and $\varphi = \Phi(u)$. 
\end{theorem}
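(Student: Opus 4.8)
The plan is to unwind the notion of a critical point of $F$ constrained on $M \times \widetilde V$ into its two natural components---one in the $u$-direction and one in the $\varphi$-direction---and to match each against the corresponding statement for $J|_M$ and for $\Phi$. Since $\widetilde V$ is a linear subspace, its tangent space at any point is $\widetilde V$ itself, so $(u,\varphi)$ is a constrained critical point of $F$ precisely when
\begin{equation*}
F_u'(u,\varphi)[v] = 0 \ \ \forall v \in T_u M
\qquad \text{and} \qquad
F_\varphi'(u,\varphi)[\xi] = 0 \ \ \forall \xi \in \widetilde V.
\end{equation*}
The whole argument then reduces to showing that the second condition is equivalent to $\varphi = \Phi(u)$, and that, once this holds, the first condition is equivalent to $u$ being a critical point of $J|_M$.

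For the $\varphi$-direction I would observe that, because every $\xi \in \widetilde V$ has zero average, the term $\frac{\alpha}{2|\Omega|}\int_{\Omega} \xi dx$ appearing in $F_\varphi'$ drops out, so the requirement $F_\varphi'(u,\varphi)[\xi] = 0$ for all $\xi \in \widetilde V$ is exactly the weak formulation
\begin{equation*}
\int_{\Omega} \Delta \varphi \Delta \xi dx + \int_{\Omega} \nabla \varphi \nabla \xi dx = \int_{\Omega} q u^2 \xi dx \qquad \forall \xi \in \widetilde V
\end{equation*}
that defines $L(qu^2) = \Phi(u)$. By the uniqueness part of Proposition \ref{prop:auxiiary_problem_to_extend_Phi}, this forces $\varphi = \Phi(u)$; conversely, if $\varphi = \Phi(u)$ the identity holds by the very definition of $\Phi$. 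This equivalence is the only genuine content in the $\varphi$-variable, and I expect it to be the main (though still mild) obstacle, since it is the one place where one must invoke \emph{uniqueness} of the solution of the auxiliary problem rather than merely a formal manipulation.

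For the $u$-direction I would simply use the chain-rule identity already recorded just above the statement, namely $J'(u)[v] = F_u'(u,\Phi(u))[v]$ for $u \in M$, which holds because $F_\varphi'(u,\Phi(u))$ annihilates $\Phi'(u)[v] \in \widetilde V$. Assembling the two directions then finishes the proof. If $u$ is a critical point of $J|_M$ and $\varphi = \Phi(u)$, the $\varphi$-condition holds by the previous paragraph, and for every $v \in T_u M$ we get $F_u'(u,\varphi)[v] = J'(u)[v] = 0$, so $(u,\varphi)$ is a constrained critical point of $F$. Conversely, if $(u,\varphi)$ is a constrained critical point, the $\varphi$-condition yields $\varphi = \Phi(u)$, and then $J'(u)[v] = F_u'(u,\varphi)[v] = 0$ for all $v \in T_u M$, so $u$ is a critical point of $J|_M$. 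This closes the equivalence.
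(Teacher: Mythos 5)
Your proposal is correct and follows essentially the same route the paper takes: the paper deduces this theorem from exactly the two ingredients you use, namely the previously recorded equivalence ``$\varphi = \Phi(u)$ if and only if the weak identity $\int_\Omega \Delta\varphi\,\Delta\eta\,dx + \int_\Omega \nabla\varphi\,\nabla\eta\,dx = \int_\Omega q u^2 \eta\,dx$ holds for all $\eta \in \widetilde V$'' (which encodes the uniqueness from Proposition \ref{prop:auxiiary_problem_to_extend_Phi}), and the chain-rule identity $J'(u)[v] = F_u'(u,\varphi_u)[v]$ stated just before the theorem. You merely make explicit the splitting of constrained criticality into the $T_uM$ and $\widetilde V$ directions and the cancellation of the average term $\frac{\alpha}{2|\Omega|}\int_\Omega \xi\,dx$, which the paper leaves implicit.
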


\section{Proof of the main result} \label{sec:final}

The next lemma will be useful.

\begin{lemma}\label{lem:ps}
Let $D$ be a regular domain of $\mathbb R^N$ and
\begin{equation*}
\label{eq:lemma_sobolev_spaces_1}
1 \leq s \leq N,
\end{equation*}
\begin{equation*}
\label{eq:lemma_sobolev_spaces_2}
s < p < s^* = \frac{Ns}{N - s}
\end{equation*}
and 
\begin{equation*}
\label{eq:lemma_sobolev_spaces_3}
0 < r \leq N \left(1 - \frac{p}{s^*} \right).
\end{equation*}
Then there exists a constant $C > 0$ such that for every $u \in W^{1, s}(D)$ it holds that 
\begin{equation*}
\label{eq:lemma_sobolev_spaces_conclusion}
||u||_p^p \leq C ||u||_{W^{1, s}}^{p - r}||u||_s^r
\end{equation*}
\end{lemma}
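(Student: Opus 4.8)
The plan is to reduce the inequality to the single \emph{extremal} exponent $r_0 := N\left(1 - \frac{p}{s^*}\right)$, prove it there by the classical interpolation-plus-Sobolev argument, and then deduce the whole admissible range $0 < r \le r_0$ by a short monotonicity step.

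First I would treat the case $r = r_0$. Since $s < p < s^*$, there is a unique $\theta \in (0,1)$ with $\frac1p = \frac{1-\theta}{s} + \frac{\theta}{s^*}$; using $\frac{1}{s^*} = \frac1s - \frac1N$ one finds $\theta = \frac{N(p-s)}{ps}$, and a direct computation shows $(1-\theta)p = N\left(1 - \frac{p}{s^*}\right) = r_0$, equivalently $\theta p = p - r_0$. The elementary interpolation inequality for Lebesgue norms (a consequence of H\"older) gives
\[
\|u\|_p \le \|u\|_s^{\,1-\theta}\,\|u\|_{s^*}^{\,\theta},
\]
and, since $D$ is regular and $s < N$, the Sobolev embedding $W^{1,s}(D)\hookrightarrow L^{s^*}(D)$ yields $\|u\|_{s^*} \le C\|u\|_{W^{1,s}}$. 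Combining these and raising to the power $p$ produces
\[
\|u\|_p^p \le C\,\|u\|_{W^{1,s}}^{\,\theta p}\,\|u\|_s^{\,(1-\theta)p} = C\,\|u\|_{W^{1,s}}^{\,p-r_0}\,\|u\|_s^{\,r_0},
\]
which is exactly the claim for $r = r_0$.

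It remains to pass from $r_0$ to an arbitrary $r$ with $0 < r \le r_0$. Here I would use that, with the standard norm, $\|u\|_s \le \|u\|_{W^{1,s}}$ for every $u \in W^{1,s}(D)$. Since $r_0 - r \ge 0$, this gives $\|u\|_s^{\,r_0} = \|u\|_s^{\,r}\,\|u\|_s^{\,r_0 - r} \le \|u\|_s^{\,r}\,\|u\|_{W^{1,s}}^{\,r_0 - r}$, and hence
\[
\|u\|_{W^{1,s}}^{\,p-r_0}\,\|u\|_s^{\,r_0} \le \|u\|_{W^{1,s}}^{\,p-r_0}\,\|u\|_s^{\,r}\,\|u\|_{W^{1,s}}^{\,r_0 - r} = \|u\|_{W^{1,s}}^{\,p-r}\,\|u\|_s^{\,r}.
\]
Chaining this with the extremal case closes the argument (the case $u = 0$ being trivial, so no division is needed).

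As for difficulties, there is no deep obstacle: this is a Gagliardo--Nirenberg-type estimate. The only points requiring care are the bookkeeping that identifies $(1-\theta)p$ with the extremal value $r_0$, so that the interpolation exponents match the statement, and the observation that the hypothesis permits a whole interval of $r$, which must be handled by the monotonicity step above rather than by interpolation alone (interpolation by itself pins down only $r = r_0$). One should also record that the argument uses $s < N$, so that $s^*$ is finite; the endpoint $s = N$ is in fact excluded by the standing constraint $p < s^*$.
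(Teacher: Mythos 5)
Your proof is correct, but it is worth pointing out that the paper itself contains no argument for this lemma: its ``proof'' consists of the single line ``See \cite[Lemma 3.1]{PisaniSiciliano2007}'', i.e.\ it outsources the estimate to Pisani--Siciliano. What you have written is a complete, self-contained derivation of that cited result: Lyapunov/H\"older interpolation of $L^p$ between $L^s$ and $L^{s^*}$, the Sobolev embedding $W^{1,s}(D)\hookrightarrow L^{s^*}(D)$ (valid since $D$ is regular, hence an extension domain), and the exponent bookkeeping $\theta = \frac{N(p-s)}{ps}$, $(1-\theta)p = N\bigl(1-\frac{p}{s^*}\bigr) = r_0$, which all checks out; the monotonicity step $\|u\|_s \le \|u\|_{W^{1,s}}$ then correctly extends the extremal case $r=r_0$ to the whole interval $0<r\le r_0$, a point that pure interpolation alone would miss. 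The one caveat is the endpoint $s=N$ nominally allowed by the hypothesis $1\le s\le N$: there $s^*=\frac{Ns}{N-s}$ is not defined (with the convention $s^*=+\infty$ one would have $r_0=N$), and your interpolation-plus-embedding scheme cannot reach that limiting exponent --- it would require the genuine borderline Gagliardo--Nirenberg inequality. Your reading that this case is effectively excluded by the constraint $p<s^*$ is reasonable, and in any event the paper only applies the lemma with $s=2$, $N=3$, $p\in(2,10/3)$, squarely inside the range your argument covers. The net effect of your route is to make this step of the paper self-contained at essentially no extra cost.
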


\begin{proof}
See \cite[Lemma 3.1]{PisaniSiciliano2007}.
\end{proof}

\begin{remark}
If $D$ is bounded, then the conclusion of the lemma is true also in the case $p \in [1, s]$ with $r < p$. Also, if $D$ is bounded and $u \in W_0^{1, p} (D)$, then, by Poincar\'e inequality, 
$$
||u||_p^p \leq C ||\nabla u||_s^{p - r} ||u||_s^r.
$$
\end{remark}

The following lemma gives the existence of solutions to our modified problem.

\begin{lemma}\label{lem:minimo}
\label{lem:coercivity_and_weak_lower_semicontinuity}
The functional $J$ on $M$ is weakly lower semicontinuous and coercive. In particular, it has a minimum $u \in M$,
and it can be assumed positive.
\end{lemma}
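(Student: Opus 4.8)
The plan is to establish coercivity and weak lower semicontinuity of $J$ on $M$ separately and then conclude by the direct method of the calculus of variations, using that $M$ is weakly closed; positivity will come at the end from the evenness of $J$ together with the maximum principle. Throughout I would work with the reduced expression
$$J(u) = \frac12\|\nabla u\|_2^2 + \frac14\|\Phi(u)\|_{\widetilde V}^2 + \int_\Omega q\chi u^2\,dx - \frac{\kappa}{p}\|u\|_p^p,$$
and exploit that $\|u\|_2 = 1$ on $M$. In particular the interaction term is nonnegative, and since $\chi \in H^2(\Omega)\hookrightarrow L^\infty(\Omega)$ (as $\Omega\subset\mathbb R^3$) and $q \in C(\overline\Omega)$, the term $\int_\Omega q\chi u^2\,dx$ is bounded on $M$ by $\|q\|_\infty\|\chi\|_\infty$.

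The main point is coercivity, and this is where the restriction $p<10/3$ enters. I would discard the nonnegative interaction term and control the defocusing term $-\tfrac{\kappa}{p}\|u\|_p^p$ by interpolation: applying Lemma~\ref{lem:ps} (in the boundary form of the following Remark) with $N=3$ and $s=2$, so that $s^*=6$, gives $\|u\|_p^p \le C\|\nabla u\|_2^{p-r}\|u\|_2^r$ for any $r$ with $0<r\le 3-\tfrac{p}{2}$. On $M$ this reads $\|u\|_p^p \le C\|\nabla u\|_2^{p-r}$. Since $p<10/3$ is exactly the inequality $p-2<3-\tfrac{p}{2}$, I can choose $r$ with $p-2<r\le 3-\tfrac{p}{2}$, whence $0<p-r<2$. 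Young's inequality then absorbs this term into $\tfrac14\|\nabla u\|_2^2$, leaving
$$J(u) \ge \frac14\|\nabla u\|_2^2 - c,$$
which together with Poincar\'e's inequality yields $J(u)\to+\infty$ as $\|u\|_{H_0^1(\Omega)}\to\infty$ on $M$.

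Next I would check weak lower semicontinuity. If $u_n\rightharpoonup u$ in $H_0^1(\Omega)$, the gradient term is weakly lower semicontinuous, $\|\nabla u\|_2^2\le\liminf\|\nabla u_n\|_2^2$, while the remaining three terms are in fact weakly continuous: the interaction term converges by Lemma~\ref{lem:compactness_of_Phi}, and the compact embeddings $H_0^1(\Omega)\hookrightarrow L^2(\Omega)$ and $H_0^1(\Omega)\hookrightarrow L^p(\Omega)$ (valid since $p<6$) give $\int_\Omega q\chi u_n^2\,dx\to\int_\Omega q\chi u^2\,dx$ and $\|u_n\|_p^p\to\|u\|_p^p$. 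Summing, $J(u)\le\liminf J(u_n)$.

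Finally comes the direct method. A minimizing sequence for $J$ on $M$ is bounded in $H_0^1(\Omega)$ by coercivity, hence has a weakly convergent subsequence; since $M$ is weakly closed the weak limit $u$ lies in $M$, and weak lower semicontinuity makes it a minimizer. For positivity I would use that $J(|u|)=J(u)$ --- indeed $|u|\in M$, $\|\nabla|u|\|_2=\|\nabla u\|_2$, $\||u|\|_p=\|u\|_p$ and $\Phi(|u|)=\Phi(u)$ --- so one may assume $u\ge0$; writing the Euler--Lagrange equation satisfied by the constrained minimizer in the form $-\Delta u + a(x)u = \kappa u^{p-1}\ge0$, with $a=q(\varphi_u+\chi)-\omega+\mu q\in L^\infty(\Omega)$, the strong maximum principle gives $u>0$ in $\Omega$. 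The only genuinely delicate step is the coercivity estimate, which rests on the sharp interpolation exponent and is precisely what pins down the range $p\in(2,10/3)$.
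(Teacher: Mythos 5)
Your proof is correct and follows essentially the same route as the paper: coercivity via Lemma~\ref{lem:ps} with $N=3$, $s=2$ and a choice of $r$ with $p-2<r\le 3-p/2$ (exactly where $p<10/3$ enters), weak lower semicontinuity via Lemma~\ref{lem:compactness_of_Phi} together with the compact embeddings, and the direct method on the weakly closed set $M$, with evenness $J(u)=J(|u|)$ handling the sign of the minimizer. The only differences are inessential: Young's inequality is not needed since $p-r<2$ already makes $\|\nabla u\|_2^{p-r}$ lower order, and your maximum-principle argument upgrades the paper's conclusion (a nonnegative minimizer) to strict positivity.
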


\begin{proof}
We have
\begin{align*}
J(u) & = \frac12 \int_{\Omega} |\nabla u|^2dx + \frac14 \int_{\Omega} (\Delta \varphi_u)^2dx + \frac14 \int_{\Omega} |\nabla \varphi_u|^2dx 
 + \int_{\Omega} q \chi u^2 dx - \frac{\kappa}{p} \int_{\Omega} |u|^pdx\\
& \geq  \frac12 \int_{\Omega} |\nabla u|^2 dx - \|q\|_{\infty}\|\chi\|_{\infty}
- \frac{\kappa}{p} \int_{\Omega} |u|^pdx.
\end{align*}
Finally, we apply Lemma \ref{lem:ps} with $s = 2$ and $N = 3$. Since $p \in (2, 10/3)$ it holds that 
$$
p - 2 < 3 \left(1 - \frac p6 \right) < 2
$$
and we can choose
$$
p - 2 < r < 3 \left(1 - \frac p6 \right),
$$
so that by the Lemma it follows that 
$$
\frac{\kappa}{p} \int_{\Omega}|u|^p dx \leq c||\nabla u||_2^{p - r}.
$$
Hence, 
$$
J(u) \geq \frac12 \int_{\Omega} |\nabla u|^2 dx - \|q\|_{\infty}\|\chi\|_{\infty}- c'||\nabla u||_2^{p - r} 
$$
and thus $J$ is coercive and bounded from below on $M$.

Now, let $\{u_n\} \subset M$ such that $u \rightharpoonup u$. Since $M$ is weakly closed, $u \in M$. By Lemma \ref{lem:compactness_of_Phi} we know that
$$
\frac14 \int_{\Omega} (\Delta \varphi_{u_n})^2 dx + \frac14 \int_{\Omega} |\nabla \varphi_{u_n}|^2 dx \to \frac14 \int_{\Omega} (\Delta \varphi_u)^2 dx + \frac14 \int_{\Omega} |\nabla \varphi_u|^2 dx.
$$
We also know that $u_n^2 \to u^2$ in $L^{6/5}(\Omega)$ so
\begin{align*}
\int_{\Omega} q \chi (u_n^2 - u^2) dx & \leq c \int_{\Omega} u_n^2 - u^2 dx \\
& \leq c ||u_n - u||_{6/5} \to 0.
\end{align*}
Finally, the first and last terms are the norms of $u$ in $H_0^1(\Omega)$ and $L^p (\Omega)$ (up to constants), so they are weakly lower semicontinuous. 

Thus $J$ is weakly lower semicontinuous and the  existence of the minimum follows by standard results.
Note that $J(u) = J(|u|)$ so the minimum may be assumed to be positive. 
\end{proof}


We will use a deformation argument to show that there are infinitely many solutions. A crucial point is that the functional satisfies the Palais-Smale condition.
We recall that in general, it is said that the $C^{1}$ functional $\mathcal I$
satisfies the Palais-Smale condition on the manifold $\mathcal M$, if 
any sequence $\{u_{n}\}\subset \mathcal M$ such that
$\left\{\mathcal I(u_{n})\right\}$ is bounded and   $\mathcal I(u_{n}) \to0$
in the tangent bundle, admits a convergent subsequence to an element $u\in \mathcal M$.

\begin{proposition}
\label{prop:Palais_Smale}
The functional $J$ satisfies the Palais-Smale condition on $M$.
\end{proposition}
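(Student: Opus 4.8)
The plan is to follow the classical scheme for constrained functionals: extract a weakly convergent subsequence from coercivity, identify the Lagrange multipliers and—this is the crux—show they remain bounded, and finally upgrade weak to strong convergence by testing the Euler--Lagrange identity with $u_n-u$.

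So let $\{u_n\}\subset M$ be a Palais--Smale sequence, i.e. $\{J(u_n)\}$ is bounded and $dJ|_M(u_n)\to 0$. Since $J$ is coercive on $M$ by Lemma \ref{lem:minimo}, $\{u_n\}$ is bounded in $H_0^1(\Omega)$, so up to a subsequence $u_n\rightharpoonup u$ in $H_0^1(\Omega)$ and, by the compact Sobolev embeddings, $u_n\to u$ in $L^s(\Omega)$ for every $s\in[1,6)$. As $M$ is weakly closed, $u\in M$; in particular $\int_\Omega u^2\,dx=1$ and $\int_\Omega qu^2\,dx=\alpha$. By the Lagrange multiplier rule the condition $dJ|_M(u_n)\to0$ means there are $\omega_n,\mu_n\in\mathbb R$ with
\[
J'(u_n)-\omega_n G_1'(u_n)-\mu_n G_2'(u_n)=\sigma_n,\qquad \|\sigma_n\|_{H^{-1}(\Omega)}\to0 ,
\]
where $G_1'(u)[v]=2\int_\Omega uv\,dx$ and $G_2'(u)[v]=2\int_\Omega quv\,dx$.

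The main obstacle, which I expect to be the delicate point, is the boundedness of $\{\omega_n\}$ and $\{\mu_n\}$. Here I would exploit the hypothesis $|q^{-1}(\alpha)|=0$: as shown above, this makes $G_1'(u)$ and $G_2'(u)$ linearly independent at the limit $u\in M$, so one can fix $w_1,w_2\in H_0^1(\Omega)$ for which the matrix with entries $\int_\Omega uw_i\,dx$ and $\int_\Omega quw_i\,dx$ is invertible. Since $u_n\to u$ in $L^2(\Omega)$ and $q\in L^\infty(\Omega)$, the matrices $B_n$ with entries $\int_\Omega u_nw_i\,dx$ and $\int_\Omega qu_nw_i\,dx$ converge to it and hence are uniformly invertible for $n$ large. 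Testing the identity with $w_1,w_2$ and using that $J'(u_n)[w_i]$ is bounded (by Hölder's inequality, the embedding $H_0^1(\Omega)\hookrightarrow L^6(\Omega)$, and the bound $\|\Phi(u_n)\|_{\widetilde V}\le c\|\nabla u_n\|_2^2$ from \eqref{eq:boundedness_of_Phi}) produces a linear system $2B_n(\omega_n,\mu_n)^\top=O(1)$, so $\{\omega_n\}$ and $\{\mu_n\}$ are bounded.

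With this in hand I would finally test the identity with $v=u_n-u$. Since $u_n\to u$ in $L^2(\Omega)$ and $q\in L^\infty(\Omega)$, both $\int_\Omega u_n(u_n-u)\,dx$ and $\int_\Omega qu_n(u_n-u)\,dx$ tend to $0$, while $\langle\sigma_n,u_n-u\rangle\to0$; with the multipliers bounded this gives $J'(u_n)[u_n-u]\to0$. Writing
\[
J'(u_n)[u_n-u]=\int_\Omega \nabla u_n\cdot\nabla(u_n-u)\,dx+\int_\Omega q(\Phi(u_n)+\chi)u_n(u_n-u)\,dx-\kappa\int_\Omega|u_n|^{p-2}u_n(u_n-u)\,dx,
\]
the last two terms vanish in the limit: the potential term by Hölder (the factor $q(\Phi(u_n)+\chi)u_n$ is bounded in a suitable $L^r$ via \eqref{eq:boundedness_of_Phi} and Lemma \ref{lem:compactness_of_Phi}, and $u_n-u\to0$ in a subcritical $L^s$), and the nonlinear term because $p<6$ yields $\int_\Omega|u_n|^{p-1}|u_n-u|\,dx\le\|u_n\|_p^{p-1}\|u_n-u\|_p\to0$. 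Hence $\int_\Omega\nabla u_n\cdot\nabla(u_n-u)\,dx\to0$, and since $\int_\Omega\nabla u\cdot\nabla(u_n-u)\,dx\to0$ by weak convergence, we conclude $\|\nabla(u_n-u)\|_2\to0$, i.e. $u_n\to u$ strongly in $H_0^1(\Omega)$. This establishes the Palais--Smale condition.
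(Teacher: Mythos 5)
Your proposal is correct, and its overall scheme is the one the paper follows: coercivity of $J$ on $M$ gives boundedness of the Palais--Smale sequence, hence a weak limit $u$ which lies in $M$ by weak closedness; compactness (Lemma \ref{lem:compactness_of_Phi} and the Sobolev embeddings) handles the terms involving $\Phi(u_n)$; the crux is the boundedness of the two Lagrange multipliers, after which testing with $u_n-u$ upgrades weak to strong convergence. The only place where you genuinely deviate is the multiplier step. The paper proceeds in triangular fashion: it first evaluates the Euler--Lagrange identity at $u_n$ itself, using the constraint values $\int_\Omega u_n^2\,dx=1$ and $\int_\Omega qu_n^2\,dx=\alpha$ to conclude that the \emph{combination} $\lambda_n+\alpha\beta_n$ is bounded; it then rewrites the multiplier term as $(\lambda_n+\alpha\beta_n)u_n+\beta_n(q-\alpha)u_n$ and tests against a single fixed $w$ with $\int_\Omega(q-\alpha)uw\,dx\neq0$, such a $w$ existing because $|q^{-1}(\alpha)|=0$ and $\|u\|_2=1$ force $(q-\alpha)u\not\equiv0$; this isolates $\beta_n$ and hence also $\lambda_n$. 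You instead fix two test functions $w_1,w_2$ from the linear independence of $G_1'(u),G_2'(u)$ at the limit and solve a uniformly invertible $2\times2$ system $2B_n(\omega_n,\mu_n)^{\top}=O(1)$. Both arguments hinge on exactly the same non-degeneracy hypothesis $|q^{-1}(\alpha)|=0$ (indeed, the paper's proof of linear independence of $G_1'(u),G_2'(u)$ is precisely the statement $(q-\alpha)u\not\equiv0$), so the two routes are equivalent in substance; yours packages the non-degeneracy as uniform invertibility of a matrix and avoids the preliminary evaluation at $u_n$, while the paper's exploits the constraint identities to reduce to one scalar unknown. The remainder of your argument (vanishing of the potential term by the uniform bound $\|\Phi(u_n)\|_{\widetilde V}\leq c\|\nabla u_n\|_2^2$, of the nonlinear term by $p<6$, and the final identity $\|\nabla(u_n-u)\|_2^2=\int_\Omega\nabla u_n\cdot\nabla(u_n-u)\,dx-\int_\Omega\nabla u\cdot\nabla(u_n-u)\,dx\to0$) matches the paper's conclusion.
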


\begin{proof}
Let $\{u_n\} \subset M$ be 
such that
$$
\{J(u_n)\} \ \text{ is bounded }
$$
and 
\begin{equation}
\label{eq:derivative_goes_to_zero}
J|_M'(u_n) \to 0.
\end{equation}
By \eqref{eq:derivative_goes_to_zero}  there exists two sequences of real numbers $\{\lambda_n)\}, \{\beta_n \} $ 
and a sequence $\{v_{n}\} \subset H^{-1}$ such that $v_n \to 0$ and 
\begin{equation}
\label{eq:derivative_goes_to_zero_meaning}
- \Delta u_n + q(\varphi_n + \chi)u_n - \kappa |u_n|^{p - 2}u_n = \lambda_n u_n + \beta_n q u_n + v_n
\end{equation}
where $\varphi_n := \varphi_{u_n}$.

Since $J$ is coercive and $\{J(u_n)\}$ is bounded, we know that $\{u_n\}$ is bounded in $H_0^1 (\Omega)$. Hence there exists $u \in H_0^1(\Omega)$ such that $u_n \rightharpoonup u$, up to a subsequence. By the compact embeddings and Lemma \ref{lem:compactness_of_Phi} we know that 
\begin{equation}\label{eq:conv}
u_{n}\to u \quad\text{in } \  L^{p}(\Omega), \quad 
\varphi_n \to \varphi_{u}\quad  \text{ in } \  H^{2}(\Omega).
\end{equation}
 Also, since $M$ is weakly closed, we know that $u \in M$. It only remains to show that $u_n \to u$ in $H_0^1 (\Omega)$.

By \eqref{eq:derivative_goes_to_zero_meaning} we have that 
\begin{equation}
\label{eq:derivative_goes_to_zero_applied_to_un}
\frac12 \int_{\Omega} |\nabla u_n|^2 dx + \frac12 \int_{\Omega} q(\varphi_n + \chi)u_n^2 dx - \frac{\kappa}{p} \int_{\Omega} |u_n|^p dx - \langle v_n, u_n \rangle = \lambda_n + \alpha \beta_n.
\end{equation}
By \eqref{eq:conv} we infer
\begin{eqnarray*}\label{eq:}
\left| \int_{\Omega} \left(q(\varphi_n + \chi)u_n^2 -  q(\varphi_u + \chi)u^2  \right)dx\right| &\leq&
c \int_{\Omega} \left| \varphi_{n}+\chi \right| |u_{n}^{2} - u^{2}| dx +
\int_{\Omega} u^{2} \left| \varphi_{n} - \varphi \right| dx \\
&=&o_{n}(1)
\end{eqnarray*}
where we are denoting with $o_{n}(1)$ a vanishing sequence.
%
Then the right-hand side of \eqref{eq:derivative_goes_to_zero_applied_to_un} is bounded
and we can assume that
$$
\lambda_n + \alpha \beta_n =\xi + o_{n}(1)
$$
with $\xi \in \mathbb R$.
Then \eqref{eq:derivative_goes_to_zero_meaning} becomes 
\begin{equation}
\label{eq:derivative_goes_to_zero_becomes}
-\Delta u_n + q(\varphi_n + \chi)u_n - \kappa |u_n|^{p - 2}u_n - v_n =(\xi + o(1))u_n - \beta_n(q - \alpha)u_n.
\end{equation}

Now, since $u \in M$ we know that $||u||_2 = 1$. This, together with the assumption $|q^{-1}(\alpha)| = 0$ implies that $(q - \alpha)u$ is not identically zero. Then there exists a test function $w \in C^{\infty}_{0}(\Omega)$ such that 
$$
\int_{\Omega} (q - \alpha)u w dx \neq 0.
$$
Evaluating \eqref{eq:derivative_goes_to_zero_becomes} on this $w$ we get
\begin{multline}\label{eq:beta}
\int_{\Omega} \nabla u_n  \nabla w dx + \int_{\Omega} q(\varphi_n + \chi)u_n w dx -\kappa \int_{\Omega}|u_n|^{p - 2}u_n w dx \\ - \langle v_n, w \rangle - (\lambda + o_{n}(1))\int_{\Omega} u_n w dx
= \beta_n \int_{\Omega} (q - \alpha)u_n w dx
\end{multline}
and using again \eqref{eq:conv} we see that
every term in the left-hand side converges. 
Also, by the weak convergence of $\{u_n\}$, 
$$
\int_{\Omega} (q - \alpha)u_n w dx \to \int_{\Omega} (q - \alpha)u w dx.
$$
This implies, coming back to \eqref{eq:beta}, that $\{\beta_n\}$ is bounded, which in turn implies that $\{\lambda_n\}$ is bounded. 

Applying \eqref{eq:derivative_goes_to_zero_becomes} to $u_n - u$ we get
\begin{multline}\label{eq:ultimaconv}
\int_{\Omega} \nabla u_n \nabla(u_n - u) dx + \int_{\Omega} q(\varphi_n + \chi)u_n(u_n - u) dx - \kappa \int_{\Omega} |u_n|^{p - 2}u_n (u_n - u)\\
- \langle v_n, u_n - u \rangle = (\lambda + o(1))\int_{\Omega} u_n (u_n - u) dx + \beta_n \int_{\Omega} (q - \alpha)u_n(u_n - u) dx. 
\end{multline}
Since (again by \eqref{eq:conv}) we have
\begin{align*}
& \int_{\Omega} q(\varphi_n + \chi)u_n(u_n - u) dx \to 0, \quad \langle v_n, u_n - u \rangle \to 0, \\
& \int_{\Omega}|u_n|^{p - 2} u_n(u_n - u) dx \to 0, \quad (\lambda + o(1))\int_{\Omega} u_n (u_n - u) dx \to 0
\end{align*}
and
$$
\beta_n \int_{\Omega} (q - \alpha)u_n(u_n - u) dx \to 0,
$$
we conclude by \eqref{eq:ultimaconv}
 that $||\nabla u_n||_2 \to ||\nabla u||_2$ and so $u_n \to u$ in $H_0^1(\Omega)$.
%
%
%
%
%
%
%
%
\end{proof}

\medskip

Now we can give the proof of Theorem \ref{thm:main_theorem}.

By Theorem \ref{thm:existence_of_sets_of_genus_k}, $M$ has compact, symmetric subsets of genus $k$ for every $k \in \mathbb N$. 

Let us recall now a classical result in critical point theory. We give the proof for the reader convenience.
\begin{lemma}
\label{lem:sublevels_have_finite_genus}
For any $b \in \mathbb R$ the sublevel
$$
J^b = \cbr{u \in M \ : \ J(u) \leq b}
$$
has finite genus.
\end{lemma}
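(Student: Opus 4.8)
The plan is to exhibit a continuous odd map from $J^b$ into some $\mathbb R^m\setminus\{0\}$, which by definition of the genus yields $\gamma(J^b)\le m<\infty$. The difficulty is that $J^b$ need not be compact in $H_0^1(\Omega)$, so I cannot invoke directly the classical fact that a compact symmetric set avoiding the origin has finite genus. The key observation is that $J^b$ is precompact in the weaker topology of $L^2(\Omega)$, and that the normalization $\|u\|_2=1$ holding on $M$ keeps its $L^2$-closure away from $0$; I then transfer the genus estimate from $L^2(\Omega)$ back to $H_0^1(\Omega)$ by composition.

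First I would use coercivity. If $b<\inf_M J$ then $J^b=\emptyset$ and $\gamma(J^b)=0$, so we may assume $J^b\neq\emptyset$. By the lower bound established in the proof of Lemma \ref{lem:minimo},
$$
J(u)\ge \tfrac12\|\nabla u\|_2^2-\|q\|_\infty\|\chi\|_\infty-c'\|\nabla u\|_2^{p-r},\qquad p-r<2 ,
$$
so $J(u)\le b$ forces $\|\nabla u\|_2\le R_b$ for some $R_b>0$; hence $J^b$ is bounded in $H_0^1(\Omega)$.

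Next I would pass to $L^2(\Omega)$. Since the embedding $H_0^1(\Omega)\hookrightarrow L^2(\Omega)$ is compact, the bounded set $J^b$ is precompact in $L^2(\Omega)$; let $K$ be its closure in $L^2(\Omega)$. Then $K$ is compact and, being the $L^2$-closure of a symmetric set, symmetric. Moreover every $u\in J^b$ satisfies $\|u\|_2=1$, because $J^b\subset M\subset S$; by continuity of the $L^2$-norm one gets $\|v\|_2=1$ for all $v\in K$, and in particular $0\notin K$. By the classical property of the Krasnoselski genus that a compact symmetric set not containing the origin has finite genus, there exist $m\in\mathbb N$ and a continuous odd map $\psi:K\to\mathbb R^m\setminus\{0\}$, the continuity being understood in the $L^2$ topology.

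Finally I would compose. The inclusion $\iota:J^b\to K$, $u\mapsto u$, is odd and continuous from the $H_0^1$ topology of $J^b$ to the $L^2$ topology of $K$, since the embedding $H_0^1(\Omega)\hookrightarrow L^2(\Omega)$ is continuous. Hence $\psi\circ\iota:J^b\to\mathbb R^m\setminus\{0\}$ is continuous and odd, so $\gamma(J^b)\le m<\infty$. I expect the only delicate point to be checking that the $L^2$-closure still avoids the origin: this is exactly where the normalization $\|u\|_2=1$ defining $M$ enters, and it is what prevents the genus from blowing up. It is worth noting that, unlike the minimax arguments to follow, this lemma uses neither the Palais--Smale condition nor Lemma \ref{lem:compactness_of_Phi}; only coercivity and the compact Sobolev embedding are needed.
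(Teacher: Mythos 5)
Your proof is correct, and it takes a genuinely different route from the paper's. The paper argues by contradiction: assuming $D=\{b:\gamma(J^b)=\infty\}\neq\emptyset$, it sets $\overline b=\inf D$, uses the Palais--Smale condition (Proposition \ref{prop:Palais_Smale}) to get compactness of the critical set $Z$ at level $\overline b$, takes a closed symmetric neighborhood $U_Z$ with $\gamma(U_Z)<\infty$, and invokes the Deformation Lemma to retract $J^{\overline b+\varepsilon}\setminus U_Z$ into $J^{\overline b-\varepsilon}$; monotonicity and subadditivity of the genus then yield $\gamma(J^{\overline b+\varepsilon})\leq\gamma(J^{\overline b-\varepsilon})+\gamma(U_Z)<\infty$, a contradiction with the choice of $\overline b$. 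Your argument is direct and more elementary: coercivity bounds $J^b$ in $H_0^1(\Omega)$, the compact embedding makes its $L^2$-closure $K$ compact and symmetric, the constraint $\|u\|_2=1$ keeps $K$ away from the origin, and then the classical covering property (a compact symmetric set avoiding $0$ has finite genus, proved by covering with finitely many pairs of disjoint balls $B_{\|x\|/2}(\pm x)$, each pair of genus one) combined with monotonicity of the genus under odd continuous maps gives $\gamma(J^b)\leq\gamma(K)<\infty$. The two-topology composition is the one delicate step and it is legitimate: the genus only requires continuity of the odd map on the set itself, and the $H_0^1$ topology on $J^b$ is finer than the one induced by $L^2$, so $\psi\circ\iota$ is continuous. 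As for what each approach buys: yours dispenses entirely with the Palais--Smale condition and deformation theory, but it relies on the special structure of $M$ (it lies on the unit sphere of $L^2$, a space into which $H_0^1(\Omega)$ embeds compactly --- this is exactly what keeps the closure away from $0$); the paper's argument is the standard abstract Ljusternik--Schnirelmann one and would survive in settings where no weaker norm simultaneously compactifies bounded sets and bounds the constraint away from the origin. One small point you should make explicit: $J^b$ is closed in $H_0^1(\Omega)$ (it equals $J^{-1}((-\infty,b])\cap M$ with $J$ continuous and $M$ weakly, hence strongly, closed), since the genus as defined in the paper applies to closed symmetric sets.
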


\begin{proof}
We argue by contradiction. Suppose that
$$
D = \cbr{b \in \mathbb R \ : \ \gamma(J^b) = \infty} \neq \emptyset.
$$
Since $J|_M$ is bounded from below, then $D$ is bounded from below. Then 
$$
- \infty < \overline b = \inf D < \infty.
$$
Moreover, since $J|_M$ satisfies the Palais-Smale condition, the set 
$$
Z = \cbr{u \in M \ : \ J(u) = \overline b, \ J|_M'(u) = 0}
$$
is compact. Hence there exists a closed symmetric neighborhood $U_Z$ of $Z$ such that $\gamma(U_Z) < \infty$. By the Deformation Lemma, there exists an $\varepsilon > 0$ such that $J^{\overline b -  \varepsilon}$ includes a deformation retract of $J^{\overline b + \varepsilon} \setminus U_Z$. Then, by the properties of the genus, 
$$
\gamma(J^{\overline b + \varepsilon}) \leq \gamma(J^{\overline b + \varepsilon} \setminus U_Z) + \gamma(U_Z) \leq \gamma(J^{\overline b - \varepsilon}) + \gamma(U_Z) < \infty, 
$$
a contradiction.
\end{proof}

Let $n \in \mathbb N$. By Lemma \ref{lem:sublevels_have_finite_genus} there exists some $k \in \mathbb N$ depending on $n$ such that 
$$
\gamma(J^n) = k.
$$
Let
$$
A_{k+1} = \cbr{A \subset M \ : \ A = -A, \overline A = A, \gamma(A) = k + 1}
$$
that we know is not empty by   Theorem \ref{thm:existence_of_sets_of_genus_k}.

 By the monotonicity property of the genus, any $A \in A_{k+1}$ is not contained in $J^n$, then $\sup_A J > n$ and therefore 
$$
c_n = \inf_{A \in A_{k+1}} \sup_{u \in A} J(u) \geq n.
$$
Well known results (see e.g. \cite{Szulkin1988})
say that $c_{n}$ are critical levels for $J|_{M}$ and then there is a
sequence $\{u_{n}\}$ of critical points such that
$$J(u_{n})= c_{n}\to +\infty$$

The critical points give rise to Lagrange multipliers $\omega_n, \mu_n$ and then, recalling the decomposition $\varphi = \phi - \chi - \mu$, to solutions $(u_n, \omega_n, \phi_n) \in H_0^1(\Omega) \times \mathbb R \times H^2 (\Omega)$ of the original problem.

\medskip

We show that $||\nabla u_n||_2 \to + \infty$.
Since
$$
\int_{\Omega} q \chi u_n^2 dx \leq ||q \chi||_\infty,
$$
and by \eqref{eq:boundedness_of_Phi} it is
$$
||\varphi_n||_{\widetilde V} = \int_{\Omega} (\Delta \varphi_n)^2 dx + \int_{\Omega} |\nabla \varphi_n|^2 dx \leq c ||\nabla u_{n}||_2^2, 
$$
we see that
$$
|J(u_n)| \leq (1+c) ||\nabla u_n||^2 + c' ||\nabla u_n||_2^{p } + ||q \chi||_\infty
$$
and then $\{u_{n}\}$ can not be  bounded.

This concludes the proof of Theorem \ref{thm:main_theorem}.


\begin{thebibliography}{99}

\bibitem{Afonso2020a} D. G. Afonso,
{\sl Normalized solutions for a Schr\"odinger-Bopp-Podolsky system, }
MSc dissertation,  Instituto de Matem\'atica e Estat\'istica - Universidade de S\~ao Paulo,
2020.










\bibitem{BenciFortunato1998} V. Benci  and D. Fortunato,
{ \sl An Eigenvalue Problem for the Schr\"odinger-Maxwell Equations},
Topological Methods in Nonlin.  Anal., {\bf 11} (1998), 283 -- 293.

\bibitem{dAveniaSiciliano2019}  P. d'Avenia and G. Siciliano,
{\sl Nonlinear Schr\"odinger equation in the Bopp-Podolsky electrodynamics: solutions in the electrostatic case},
Journal of Differential Equations, {\bf 267 } (2019), 1025 -- 1065.


\bibitem{CT} S. Chen and X. Tang
{\sl  On the critical Schr\"odinger-Bopp-Podolsky system with general nonlinearities, }
Nonlinear Anal. {\bf 195} (2020), 111734, 25 pp.




\bibitem{GG}G.M. Figueiredo and G. Siciliano
{\sl  Multiple solutions for a Schr\"odinger-Bopp-Podolsky
system with positive potentials, }
arXive: arXiv:2006.12637.

\bibitem{GazzolaGrunauSweers2010} F. Gazzola, H.-C. Grunau and G. Sweers,
Polyharmonic Boundary Value Problems,
Springer, 2010.


\bibitem{H} E. Hebey,
{\sl  Electro-magneto-static study of the nonlinear Schr\"odinger equation coupled with Bopp-Podolsky 
electrodynamics in the Proca setting, }
 Discrete Contin. Dyn. Syst. {\bf 39} (2019), no. 11, 6683–6712.


\bibitem{LPT}{L. Li, P. Pucci, X. Tang, }
{\sl Ground state solutions for the nonlinear
Schr\"odinger-Bopp-Podolski system with critical Sobolev exponent, }
Adv. Nonlinear Stud. (2020), to appear.



\bibitem{PisaniSiciliano2007} L. Pisani and G. Siciliano,
 {\sl Neumann condition in the Schr\"odinger-Maxwell system, }
Topological Methods in Nonlin.  Anal., {\bf 29} (2007), 251--264.

\bibitem{PisaniSiciliano2013} L. Pisani and G. Siciliano,
{\sl Constrained Schr\"odinger-Poisson System with Non-Constant Interaction},
Comm.  Contemp.  Mathematics, {\bf 15} n. 1 (2013), 1250052 (18 pages).






\bibitem{GK} G. Siciliano and K. Silva,
{\sl The fibering method approach for a non-linear Schr\"odinger equation coupled with the electromagnetic field, }
Publ. Mat. {\bf 64} (2020), 373--390. 


\bibitem{Szulkin1988} A. Szulkin,
{\sl Ljusternik-Schnirelman theory on $C^{1}-$manifolds, }
Annales de L'Institut Henri Poincar\'e  - Section C, tome 5,  n. 2 (1988), 119 -- 139.

\bibitem{Taylor} M. E. Taylor,
Partial Differential Equations I, Springer, 1996. 


\bibitem{Vainberg1964} M.M. Vainberg,
Variational Methods for the Study of Nonlinear Operators, Holden-Day, 1964.

\end{thebibliography}
\end{document}